\documentclass[letterpaper,11pt]{amsart}

\usepackage{epsfig}
\usepackage{amssymb}
\usepackage{amsfonts}
\usepackage{amsmath}
\usepackage{graphicx}
\usepackage{mathrsfs}
\usepackage{stmaryrd}
\usepackage{amsthm}
\usepackage[all]{xy}
\usepackage[top=1in, bottom=1in, left=1in, right=1in]{geometry}
\usepackage{color}
\usepackage[colorlinks=true]{hyperref}
\hypersetup{urlcolor=blue,citecolor=red,linkcolor=blue}

\newtheorem{theorem}{Theorem}[section]
\newtheorem{lemma}[theorem]{Lemma}
\newtheorem{proposition}{Proposition}[section]

\numberwithin{equation}{section}

\begin{document}
\date{\today}
\title
{Comparison theorems for fractional GJMS operators}

\author{Huihuang Zhou}
\address{School of Mathematical Sciences\\
	University of Science and Technology of China\\
	Hefei, 230026\\ P.R. China\\}
\email{zhouhuihuang@ustc.edu.cn}


\begin{abstract}
In this paper, we mainly focus on the fractional GJMS operators $P_{2\gamma}$ which are defined on the conformal infinity of a Poincar\'{e}-Einstein manifold. We derive two comparison inequalities of the fractional Yamabe constants associated to the fractional GJMS operators. One is between $P_{1}$ and $P_{2\gamma}$ for $\gamma\in (1/2,1)$, and the other is between $P_{2}$ and $P_{2\gamma}$ for $\gamma\in (1,2)$. They both imply the rigidity theorems by characterizing the equalities. Together with the result in \cite{WZ1}, we partially provide some evidence for the monotonicity of the fractional Yamabe constants. 
\end{abstract}

\maketitle

\section{Introduction}

Suppose $X^{n+1}(n\geq 3)$ is a smooth compact manifold with boundary $M^n$. A function $\rho$ is a  defining function of $M$ in $X$ if
\[ 0\leq \rho\in C^{\infty}(\overline{X}),\quad \rho>0 \mathrm{~in~} X,\quad \rho=0 \mathrm{~on~} M,\quad d\rho|_{M}\neq 0.\]
We say $g_+$ is a conformally compact metric on $X$ with conformal infinity $(M,[\hat{g}])$ if there exists a defining function $\rho$ such that $(\overline{X},\bar{g})$ is compact for $\bar{g}=\rho^2 g_+$, and $\bar{g}|_{TM}\in [\hat{g}]$. Then if $g_+$ is Einstein, i.e. 
\[
Ric_{g_+}=-ng_+,
\]
$(X,g_+)$ is called conformally compact Einstein manifold, or Poincar\'{e}-Einstein manifold.
 
Given a Poincar\'{e}-Einstein manifold $(X^{n+1},g_+)$ and a representative $\hat{g}\in [\hat{g}]$ on $M$, there is a uniquely geodesic normal defining function $r$ such that on a boundary collar neighborhood $M\times [0,\epsilon)$, $g_+$ has the normal form
\[
 g_+ =r^{-2}(dr^2 + g_r),
\]
for $g_r$ a one-parameter family of metrics on $M$ with $g_0 =\hat{g}$ and having asymptotic expansion that is even in $r$ at least up to order $n$ according to \cite{CDLS}:
\begin{equation}\label{Geq}
g_r=
\begin{cases}
\hat{g} +r^2 g_2 +\cdots+r^{n-1}g_{n-1}+r^n g_n +O(r^{n+1}),& n \textrm{ is odd},\\
\hat{g} +r^2 g_2 +\cdots+r^{n-2}g_{n-2}+(r^n\log r)h+ r^n g_n +O(r^{n+1}\log r),& n \textrm{ is even}.
\end{cases}
\end{equation}
Here $g_i (1\leq i\leq n-1)$ and $h$ are symmetric 2-tensors determined by $\hat{g}$, and $g_n$ is the global term which can not be locally determined. 


Consider a fractional power $\gamma\in (0,\frac{n}{2})$ and denote $s=\frac{n}{2}+\gamma$, assume $\gamma\notin\mathbb{N}$ and $s(n-s)\notin \mathrm{Spec}(-\Delta_+)$, where $-\Delta_+$ is the Beltrami-Laplacian operator of $g_+$. It is well known (c.f.\cite{MM}, \cite{GZ}) that, given any $f\in C^{\infty}(M)$, then there is a unique solution satisfying the following equation:
\begin{equation}\label{pos}
    -\Delta_+ u -s(n-s) u=0, \quad r^{s-n}u|_M =f,
\end{equation}
Moreover, $u$ takes the form
\[
u=r^{n-s}F+r^s G, \quad F|_M =f,\quad F,G\in C^{\infty}(\overline{X}).
\]
Then the scattering operator $S(s)$ is defined by
\[ S(s)f=G|_{M}.\]
We point out that the spectral condition can be satisfied by imposing some geometric conditions. By \cite{Le} if $(M,[\hat{g}])$ is of nonnegative Yamabe type, then $\lambda_1(-\Delta_+)\geq \frac{n^2}{4}$.
 
Now the fractional GJMS operator of order $2\gamma$ is defined by the normalized scattering operator:
\[
P^{\hat{g}}_{2\gamma}:=d_\gamma S\left(\frac{n}{2}+\gamma\right),\quad \textrm{for } d_\gamma=2^{2\gamma}\frac{\Gamma(\gamma)}{\Gamma(-\gamma)}.
\]
And the \emph{fractional Q-curvature} is 
\[Q^{\hat{g}}_{2\gamma}:=\frac{2}{n-2\gamma}P^{\hat{g}}_{2\gamma}(1).\]

The fractional GJMS operators is a generalization of the conformal power of Laplacian, which is defined in \cite{CC} following the spectrum theory and scattering theory for asymptotical hyperbolic manifolds in \cite{MM} and \cite{JS}. Some special case were also be studied in \cite{CS} by the extension theory. 

We emphasize that $P_{2\gamma}^{\hat{g}}$ is a self-adjoint and conformal covariant pesudo-differential elliptic  with principal symbol $|\xi|_{\hat{g}}^{2\gamma}$. The meromorphic extension of $S(s)$ given in \cite{JS} implies that $P^{\hat{g}}_{2\gamma}$ is a continuous family of operators in the real parameter $\gamma$ as long as $\frac{n^2}{4}-\gamma^2\notin \mathrm{Spec}(-\Delta_+).$
When $\gamma=k\leq \frac{n}{2}$ is a positive integer, it coincides with the classical GJMS operator\cite{GJMS} of order $2k$ by \cite{GZ}, which is  only determined by the boundary metric. 
For example, when $\gamma=1$ then $P^{\hat{g}}_2$ is the conformal Laplacian:
$$
P^{\hat{g}}_2=-\Delta_{\hat{g}}+\frac{n-2}{2}J^{\hat{g}}, \quad \textrm{where } J^{\hat{g}}=\frac{1}{2(n-1)} R^{\hat{g}}. 
$$
and when $\gamma=2$, $P^{\hat{g}}_4$ is the Paneitz operator
\[
P^{\hat{g}}_4=(-\Delta_{\hat{g}})^2 +\delta_{\hat{g}}\left((n-2)J^{\hat{g}}-4A^{\hat{g}}\right)d_{\hat{g}} +\frac{n-4}{2}Q_{4}^{\hat{g}},
\]
where $A^{\hat{g}}$ is the Schouten tensor and
\[
A^{\hat{g}}=\frac{1}{n-2}(Ric_{\hat{g}}-J^{\hat{g}}\hat{g}),\quad Q_{4}^{\hat{g}}=-\Delta_{\hat{g}} J^{\hat{g}}+\frac{n}{2}(J^{\hat{g}})^2 -2|A^{\hat{g}}|^{2}_{\hat{g}}.
\]
Moreover the fractional Yamabe constants $Y_{2\gamma}(M,\hat{g})$ are defined by the following
\[
Y_{2\gamma}(M,\hat{g})=\inf_{\hat{g}\in [\hat{g}]}\frac{\frac{n-2\gamma}{2}\int_{M} Q_{2\gamma}^{\hat{g}}dv_{\hat{g}}}{Vol(M,\hat{g})^{\frac{n-2\gamma}{n}}}=\inf_{f\in C^{\infty}(M), f>0}
\frac{\int_{M} fP^{\hat{g}}_{2\gamma}f \mathrm{dv}_{\hat{g}}}{(\int_{M} f^{\frac{2n}{n-2\gamma}} \mathrm{dv}_{\hat{g}})^{\frac{n-2\gamma}{n}}}.
\]

The classical example is the ball model of the hyperbolic space $\mathbb{H}^{n+1}$, whose conformal infinity is $(\mathbb{S}^n ,[g_{\mathbb{S}}])$, where $g_{\mathbb{S}}$ is the round metric on $\mathbb{S}^n$. In this case, it is easy to know $\textrm{Spec}(-\Delta_{g_{\mathbb{H}}})=[\frac{n^2}{4},+\infty)$. Thus according to \cite{Be1} for any $\gamma\in \left(0,\frac{n}{2}\right)$, $P_{2\gamma}^{g_{\mathbb{S}}}$ can be represented by
\begin{equation}
P_{2\gamma}^{g_{\mathbb{S}}}=\frac{\Gamma(B+\frac{1}{2}+\gamma)}{\Gamma(B+\frac{1}{2}-\gamma)},\quad \textrm{where}\quad B=\sqrt{-\Delta_{g_{\mathbb{S}}}+\frac{(n-1)^2}{4}}.
\end{equation}
Then the fractional Q-curvature is
\[
Q^{g_{\mathbb{S}}}_{2\gamma}=\frac{2}{n-2\gamma}P_{2\gamma}^{g_\mathbb{S}}(1)=\frac{2}{n-2\gamma}\frac{\Gamma(\frac{n}{2}+\gamma)}{\Gamma(\frac{n}{2}-\gamma)},
\]
and the fractional Yamabe constant is 
\[
Y_{2\gamma}(\mathbb{S}^n, [{g_{\mathbb{S}}}]) =2^{\frac{2\gamma}{n}}\pi^{\frac{\gamma(n+1)}{n}} \frac{\Gamma(\frac{n}{2}+\gamma)}{\Gamma(\frac{n}{2}-\gamma)}\left[ \Gamma\left(\frac{n+1}{2}\right)\right]^{-\frac{2\gamma}{n}} =\frac{\Gamma(\frac{n}{2}+\gamma)}{\Gamma(\frac{n}{2}-\gamma)} \left|\mathbb{S}^n\right|^{\frac{2\gamma}{n}}.
\]

We mainly want to study the family properties of these operators by comparing them with the version of the standard hyperbolic model. At the beginning, we recall the comparison theorem between $P^{\hat{g}}_{1}$ and $P^{\hat{g}}_{2}$.
\begin{theorem}[\cite{CLW,WZ1,WW}]\label{mth0}
Suppose $(X^{n+1},g_+)$ $(n\geq 3)$ is a Poincar\'{e}-Einstein manifold, which is $C^{3,\alpha}$ conformally compact with conformal infinity $(\partial X,[\hat{g}])$. Assume $\frac{n^2-1}{4}\notin \mathrm{Spec}(-\Delta_+)$,
then
\begin{equation}\label{ineq0}
\frac{Y_2(M,[\hat{g}])}{Y_2(\mathbb{S}^n,[g_{\mathbb{S}}])}
 \leq \left(\frac{Y_1(M,[\hat{g}])}{Y_1(\mathbb{S}^n,[g_{\mathbb{S}}])}\right)^2,
\end{equation}
and the equality holds if and only if $(X, g_+)$ is isometric to the hyperbolic space $\mathbb{H}^{n+1}$.
\end{theorem}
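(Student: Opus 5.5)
The plan is to pass through an adapted compactification of $g_+$ tied to the scattering problem at $\gamma=\frac12$, and to compare the two boundary quantities through an integral identity on $(\overline X,\bar g)$. The spectral assumption $\frac{n^2-1}{4}\notin\mathrm{Spec}(-\Delta_+)$ is exactly what makes $P_1$ (and hence the construction below) well defined.

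\textbf{Step 1: adapted defining function and metric.} I would first reduce to $Y_1(M,[\hat g])>0$; otherwise the right-hand side of \eqref{ineq0} needs a separate sign discussion, which I have not worked out. Then choose a representative $\hat g$ that minimizes the $P_1$-quotient, so that $Q_1^{\hat g}$ is constant. This needs an existence result for the fractional Yamabe problem at $\gamma=\frac12$, or else an approximation argument.

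Let $U$ solve \eqref{pos} with $s=\frac{n+1}{2}$ and boundary data $f\equiv1$. Put $\rho=U^{\frac{2}{n-1}}$ and $\bar g=\rho^2g_+$. Using the Einstein condition, I would verify the standard facts for this compactification:
\begin{itemize}
\item $R_{\bar g}\equiv0$ in $X$;
\item the boundary is totally geodesic for $\bar g$, because the expansion \eqref{Geq} has no odd term at order one;
\item the boundary normal derivative of $\rho$ (equivalently, the mean curvature term in the Escobar boundary operator) is a fixed multiple of $Q_1^{\hat g}$.
\end{itemize}
The extension characterization of $P_1$ (Chang--Gonz\'alez) identifies $\int_M fP_1f$ with the minimum of the $\bar g$-conformal Laplacian energy with boundary term over extensions of $f$. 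This turns $Y_1$ into a statement about $(X,\bar g)$.

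\textbf{Step 2: an integral inequality relating $J^{\hat g}$ to $(Q_1^{\hat g})^2$.} This is the heart of the argument. On $M$, the Gauss equation with $L=0$ and $R_{\bar g}=0$ gives
\[
R^{\hat g}=-2\,Ric_{\bar g}(\nu,\nu).
\]
Next, the Einstein equation $Ric_{g_+}=-ng_+$ rewritten for $\bar g=\rho^2g_+$ expresses $Ric_{\bar g}$ through $\rho^{-1}\nabla^2_{\bar g}\rho$ and $|d\rho|^2_{\bar g}$. I would then integrate a Bochner/Reilly-type divergence identity for $\rho$ (or for a power of $U$) over $X$. Its boundary terms should produce $\int_M J^{\hat g}$ and $\int_M (Q_1^{\hat g})^2$, and its interior term should be a nonnegative quantity such as $\int_X \rho^{a}|\mathring{\nabla}^2_{\bar g}\rho|^2$, or the traceless Ricci of $\bar g$. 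The target is
\[
\int_M J^{\hat g}\,dv_{\hat g}\le c_n\int_M (Q_1^{\hat g})^2\,dv_{\hat g},
\]
with $c_n$ fixed by demanding equality on the hemisphere/ball model.

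Since $Q_1^{\hat g}$ is constant, inserting $\hat g$ as a test metric in the $Y_2$ quotient gives
\[
Y_2\le \frac{n-2}{2}\int_M J^{\hat g}\,\mathrm{Vol}^{-\frac{n-2}{n}}\le C\,\bigl(Q_1^{\hat g}\bigr)^2\,\mathrm{Vol}^{\frac{2}{n}}=C'\,Y_1^2 .
\]
Normalizing by the sphere values computed above yields \eqref{ineq0}.

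\textbf{Step 3: rigidity, and the expected obstacle.} If equality holds, the interior term in Step 2 vanishes, so $\nabla^2_{\bar g}\rho$ is pure trace. Combined with $R_{\bar g}=0$ and the Einstein equation, this should force $|d\rho|^2_{\bar g}$ together with a quadratic in $\rho$ to satisfy an Obata-type relation. That relation identifies $(X,g_+)$ with $\mathbb H^{n+1}$, with $\rho$ corresponding to the standard half-space or ball defining function.

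The main obstacle I expect is finding the right weighted divergence identity in Step 2. The interior term must have a definite sign, and the boundary terms must be exactly $J$ and $Q_1^2$ with no leftover. The regularity $C^{3,\alpha}$ should just suffice for the boundary expansion of $\rho$ to second order. My first attempt would be to take the vector field $\rho^{-1}\nabla(|d\rho|^2_{\bar g})$ corrected by a multiple of $\nabla\rho$, since for the hyperbolic model this combination is divergence-free.
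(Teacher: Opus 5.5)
Your route is the right one: it is the Gursky--Han argument that the paper attributes to \cite{CLW,WZ1,WW} and whose core it records as Lemma~\ref{btlem}. But Step~2 does not go through as written. It rests on a false boundary fact, and the identity that carries the proof is missing.

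\textbf{The boundary is umbilic, not totally geodesic.} $M$ is \emph{not} totally geodesic for $\bar g=\bar\rho^2g_+$. Evenness of $g_r$ gives that property for $r^2g_+$, but here $\bar\rho=r(1-Q_1r+\cdots)$. So $M$ is umbilic in $\bar g$, with second fundamental form $Q_1\hat g$ with respect to the outer normal; in the model, $\bar g$ is the flat unit ball. Your own third bullet, where $Q_1$ enters as a mean-curvature term, already contradicts $L=0$. Also, $\partial_r\bar\rho|_M=1$; $Q_1$ only enters at second order, through $\bar\nabla^2\bar\rho|_M=-Q_1\bar g$. The correct Gauss equation with $R_{\bar g}=0$ is
\[
\hat J=\frac n2Q_1^2-\frac1{n-1}Ric_{\bar g}(\nu,\nu)\quad\text{on }M .
\]
So your identity $R^{\hat g}=-2Ric_{\bar g}(\nu,\nu)$ drops exactly the $n(n-1)Q_1^2$ term you hoped to extract from boundary terms of a divergence identity. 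What actually has to be shown is $\int_M Ric_{\bar g}(\nu,\nu)\,dv_{\hat g}\ge0$.

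\textbf{The missing identity.} The quantity you need is $\bar T=(1-|d\bar\rho|^2_{\bar g})/\bar\rho$. The Einstein equation together with $R_{\bar g}=0$ gives
\[
Ric_{\bar g}=-(n-1)\bar\rho^{-1}\Bigl(\bar\nabla^2\bar\rho+\frac{\bar T}{2}\bar g\Bigr),\qquad \Delta_{\bar g}\bar\rho=-\frac{n+1}{2}\bar T .
\]
Hence $Ric_{\bar g}(\bar\nabla\bar\rho,\bar\nabla\bar\rho)=\frac{n-1}{2}\langle\bar\nabla\bar T,\bar\nabla\bar\rho\rangle$. Bochner's formula for $|d\bar\rho|^2_{\bar g}$ then yields
\[
\Delta_{\bar g}\bar T=-2\bar\rho^{-1}\bigl|\mathring{\bar\nabla}{}^2\bar\rho\bigr|^2=-\frac{2}{(n-1)^2}\bar\rho|\bar E|^2 ,
\]
with $\bar T|_M=2Q_1$ and $\partial_r\bar T|_M=nQ_1^2-2\hat J=\frac{2}{n-1}Ric_{\bar g}(\nu,\nu)$. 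Integrating over $X$ gives, for every representative,
\[
\int_M\bigl(nQ_1^2-2\hat J\bigr)\,dv_{\hat g}=\frac{2}{(n-1)^2}\int_X\bar\rho|\bar E|^2\,dv_{\bar g}\ge0 .
\]
This is your target with $c_n=n/2$. Constancy of $Q_1$ is needed only to turn $\int_MQ_1^2$ into $Y_1^2$.

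Some details of your Step~2 guess need fixing:
\begin{itemize}
\item Your interior weight is $a=-1$, since $\bar E=-(n-1)\bar\rho^{-1}\mathring{\bar\nabla}{}^2\bar\rho$. The formula for $E_{\bar g}$ displayed in the introduction should carry $\rho^{-1}$, not $\rho$.
\item Your guessed vector field works only if the multiple of $\bar\nabla\bar\rho$ is the function $\bar\rho^{-1}\bar T$. Indeed $-\bar\nabla\bar T=\bar\rho^{-1}\bigl(\bar\nabla|d\bar\rho|^2_{\bar g}+\bar T\,\bar\nabla\bar\rho\bigr)$, which vanishes in the model. With a constant coefficient the field is not divergence-free even there.
\end{itemize}

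\textbf{Rigidity.} Equality forces $\bar E\equiv0$. Then $\bar T$ is harmonic with constant boundary value $2Q_1$, hence $\bar T\equiv2Q_1$. This gives $\bar\nabla^2\bar\rho+Q_1\bar g=0$ with $\bar\rho|_M=0$, which is the Obata problem of Lemma~\ref{btlem}.

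\textbf{Case reduction.} Reduce to $Y_2>0$, not $Y_1>0$: for $Y_2\le0$ the inequality is trivial, because its right side is a square. For $Y_2\ge0$, Lee's theorem gives $\lambda_1(-\Delta_+)\ge n^2/4$. You need this anyway, since $\frac{n^2-1}{4}\notin\mathrm{Spec}(-\Delta_+)$ alone does not make $U$ positive, so $\bar\rho=U^{2/(n-1)}$ could fail to exist. What remains is to exclude equality when $Y_1=Y_2=0$. There the identity applied to a representative with $Q_1\equiv0$ gives $\int_M\hat J<0$, since equality would force the hyperbolic case, where $Q_1>0$. Hence $Y_2<0$, a contradiction.
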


The inequality (\ref{ineq0}) is conformally invariant and was first proved by \cite{CLW} since $Y_1(M,[\hat{g}])$ is the same as the second Escobar-Yamabe type \cite{Es1,Es2}. Together with the results of \cite{WZ2,DJ,LQS} where they proved that all fractional Yamabe constants with $\gamma\in(0,1]$ are lower bounds of the relative volume for Poincar\'{e}-Einstein manifold, it seems that the family of fractional  GJMS operators shares many intrinsic properties to be discovered (e.g. the monotonicity of the fractional Yamabe constants). Here we pay more attention to the comparison inequality for general fractional GJMS operators. Our first result is the comparison theorem between $P^{\hat{g}}_{1}$ and $P^{\hat{g}}_{2\gamma}$.

\begin{theorem}\label{mth1}
Suppose $(X^{n+1},g_+)$ $(n\geq 3)$ is a Poincar\'{e}-Einstein manifold with conformal infinity $(M,[\hat{g}])$. Assume $\lambda_1 (-\Delta_+)>\frac{n^2 -1}{4}$ and $Y_1(\partial X, [\hat{g}])\geq 0$ can be achieved by some smooth representative $\hat{g}$.
Then for all $\gamma\in \left(\frac{1}{2},1\right)$, 
\begin{equation}\label{ineq1}
\frac{Y_{2\gamma}(M,[\hat{g}])}{Y_{2\gamma}(\mathbb{S}^n,[g_{\mathbb{S}}])}
 \leq \left(\frac{Y_1(M,[\hat{g}])}{Y_1(\mathbb{S}^n,[g_{\mathbb{S}}])}\right)^{2\gamma},
\end{equation}
and the equality holds if and only if $(X, g_+)$ is isometric to the hyperbolic space $\mathbb{H}^{n+1}$.
\end{theorem}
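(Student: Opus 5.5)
Let $\hat g$ be the smooth representative that realizes $Y_1(M,[\hat g])\ge 0$. I will use it to build a special defining function. The spectral assumption lets us solve, for $s=\frac{n+1}{2}$ (so $\gamma=\frac12$), the scattering problem $-\Delta_+ u - s(n-s)u=0$ with $u = r^{\frac{n-1}{2}}(1+\dots)$. Set $\rho = u^{\frac{2}{n-1}}$ and $g=\rho^2 g_+$, in the spirit of \cite{CLW,WZ1}. Then $g$ has totally geodesic boundary up to the relevant order, and its boundary mean curvature is a multiple of $Q_1^{\hat g}$, which is the constant $Y_1/\mathrm{Vol}(M,\hat g)^{\frac{n-1}{n}}$. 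The scalar curvature of $g$ can be computed from the equation for $u$ and the Einstein condition. One gets $R_g = c_n\,\rho^{-2}(1-|d\rho|_g^2)$ for an explicit positive $c_n$, and, as in \cite{WZ1}, $|d\rho|_g\le 1$ by a maximum principle argument, using $Y_1\ge0$ and $\lambda_1>\frac{n^2-1}{4}$.

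For $\gamma\in(\frac12,1)$ I would then use the variational characterization of $P_{2\gamma}$ through the weighted extension (Chang--Gonz\'alez / Case--Chang). For any compactification $\bar g=\rho^2 g_+$, $\int_M fP_{2\gamma}f$ is bounded above by a constant times an energy of the form $\int_X \rho^{1-2\gamma}\bigl(|\nabla U|^2_{\bar g}+E(\rho)U^2\bigr)\,dv_{\bar g}$ plus boundary terms. Here $U|_M=f$, and $E(\rho)$ involves $\rho^{-2}(1-|d\rho|^2)$ and $R_{\bar g}$. The constant is $d_\gamma$ normalized as in \cite{CC}. Using the special $\rho$ above, the weight $E$ can be expressed through $R_g$ and the $s=\frac{n+1}{2}$ eigenfunction. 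This makes $E$ sign-controlled, via $|d\rho|\le1$, and lets me choose as test function $U=\rho^{\,\cdot}\cdot(\text{power of } u)$ with boundary value $1$. Plugging $f\equiv1$ in the metric $\hat g$ should give
\[
\frac{n-2\gamma}{2}\int_M Q_{2\gamma}^{\hat g}\le C(n,\gamma)\,\bigl(Q_1^{\hat g}\bigr)^{2\gamma}\mathrm{Vol}(M,\hat g),
\]
with $C(n,\gamma)$ normalized by the hyperbolic ball. In that case $Q_1$, $Q_{2\gamma}$ are constants and all inequalities are equalities, which is exactly the constant needed for (\ref{ineq1}). An alternative to obtain the exponent $2\gamma$ is to interpolate. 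Consider the convex function $\gamma\mapsto\log\langle P_{2\gamma}1,1\rangle$ (log-convexity from the spectral representation of $S(s)$ in terms of $-\Delta_+$), between $\gamma=\frac12$ and $\gamma=1$ or a further endpoint. Then compare with the model. I would try the direct energy estimate first. Dividing by $\mathrm{Vol}^{\frac{n-2\gamma}{n}}$ then yields $Y_{2\gamma}(M)\le C\,Y_1(M)^{2\gamma}$ with the sharp constant.

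For rigidity: if equality holds, every inequality used must be an equality. In particular $|d\rho|_g\equiv1$, hence $R_g\equiv 0$ and the relevant Hessian/Ricci terms vanish. As in \cite{CLW,WZ1}, this forces $g$ to be flat with umbilic boundary, so $(X,g_+)$ is hyperbolic space.

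The main obstacle is the second step: producing an energy inequality for $P_{2\gamma}$ whose error terms have a sign precisely for $\gamma\in(\frac12,1)$, and whose extremal constant matches $Y_{2\gamma}(\mathbb S^n)/Y_1(\mathbb S^n)^{2\gamma}$. I expect the restriction $\gamma>\frac12$ to enter through positivity of coefficients like $(2\gamma-1)$ in front of $\rho^{-2}(1-|d\rho|^2)$.
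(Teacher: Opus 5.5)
There is a genuine gap. The decisive comparison step is only hoped for, and the information you plan to feed into it is too weak to give the sharp constant.

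\textbf{Wrong inputs.} Take $\bar\rho=\bar v^{2/(n-1)}$ with $\Delta_+\bar v+\frac{n^2-1}{4}\bar v=0$. Then $\Delta_{\bar g}\bar\rho=-\frac{n+1}{2}\bar T$, where $\bar T:=(1-|d\bar\rho|^2_{\bar g})/\bar\rho$. So $\bar g=\bar\rho^2g_+$ is scalar-flat, with umbilic (not totally geodesic) boundary of mean curvature proportional to $Q_1$. The formula $R=c_n\rho^{-2}(1-|d\rho|^2)$ belongs to the other compactification $\rho=v^{-1}$, $\Delta_+v=(n+1)v$. The quantity that matters is $\bar T$. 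It satisfies $\Delta_{\bar g}\bar T=-\frac{2}{(n-1)^2}\bar\rho\,|E_{\bar g}|^2_{\bar g}\le0$ (with $E_{\bar g}$ the trace-free Ricci tensor) and $\bar T|_M=2Q_1$. With $Q_1\equiv1$ this gives $|d\bar\rho|^2_{\bar g}\le1-2\bar\rho$, hence $\bar\rho\le\frac12$. That is exactly the identity satisfied by $\rho^{\mathbb H}_{s_1}=4r/(2+r)^2$ on $\mathbb H^{n+1}$. The Hopf lemma adds $\hat J\le\frac n2Q_1^2$. Your bound $|d\rho|_g\le1$ is only $\bar T\ge0$. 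Since $\bar T\equiv2$ on hyperbolic space, nothing built on it can be sharp.

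\textbf{Why your test function and energy fail.} Since $u=\rho^{(n-1)/2}$, any $\rho^{\alpha}u^{\beta}$ is a pure power of $\bar\rho$. But $\bar\rho^{\frac{n-2\gamma}{2}}=r^{\frac{n-2\gamma}{2}}\bigl(1-\frac{n-2\gamma}{2}Q_1r+\cdots\bigr)$ carries an $O(r)$ term. For $2\gamma>1$ this swamps the $r^{2\gamma}$ term that contains $Q_{2\gamma}$, so this barrier only settles the case $Q_1=0$. The same odd term breaks the energy formulation. With this $\bar\rho$, the zeroth-order term of your energy is $-\frac{(n-2\gamma)(2\gamma-1)}{4}\bar\rho^{-2\gamma}\bar T\,U^2$. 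This is not integrable near $M$ when $2\gamma>1$ and $Q_1\neq0$, so the energy diverges and the boundary terms do not vanish. The log-convexity fallback is false already on the round sphere, where $\gamma\mapsto\log\frac{\Gamma(n/2+\gamma)}{\Gamma(n/2-\gamma)}$ is strictly concave.

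\textbf{The missing idea} is to transplant the exact model solution. On $\mathbb H^{n+1}$ write $u^{\mathbb H}_s=\varphi(\rho^{\mathbb H}_{s_1})\,(\rho^{\mathbb H}_{s_1})^{\frac{n-2\gamma}{2}}$, and set $\bar u=\varphi(\bar\rho)\bar\rho^{\frac{n-2\gamma}{2}}$. We have $\varphi(\rho)=1+\frac{n-2\gamma}{2}\rho+\frac{n-2\gamma}{2d_\gamma}Q^{\mathbb S}_{2\gamma}\rho^{2\gamma}+\cdots$ and $\bar\rho=r(1-Q_1r+\cdots)$. Hence the $O(r)$ terms cancel exactly when $Q_1=Q_1^{\mathbb S}=1$.

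Because $\Delta_{\bar g}\bar\rho$ is itself a function of $\bar\rho$ and $|d\bar\rho|^2_{\bar g}$, one gets
\[
\Delta_+\bar u+\frac{n^2-4\gamma^2}{4}\bar u=\bar\rho^{\frac{n-2\gamma}{2}}\bigl(\bar A_\gamma(\bar\rho)|d\bar\rho|^2_{\bar g}+\bar B_\gamma(\bar\rho)\bigr),
\]
with explicit ODE coefficients satisfying $\bar A_\gamma(\rho)(1-2\rho)+\bar B_\gamma(\rho)=0$. The real work is the ODE lemma $\bar A_\gamma<0<\bar B_\gamma$ on $(0,\frac12)$ for $\gamma\in(\frac12,1)$. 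The signs reverse for $\gamma<\frac12$, so this is where $\gamma>\frac12$ enters. Near $\rho=0$ it is the sign of $2\gamma-1$ you anticipated, but it must hold on the whole interval.

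Then $|d\bar\rho|^2_{\bar g}\le1-2\bar\rho$ gives $\Delta_+\bar u+\frac{n^2-4\gamma^2}{4}\bar u\ge0$. The maximum principle for $\bar w/\bar u$ (with $\bar w$ the true solution, ratio $1$ on $M$) gives $\bar w\ge\bar u$. Comparing $r^{2\gamma}$-coefficients, with $d_\gamma<0$, gives the pointwise bound $Q_{2\gamma}\le Q^{\mathbb S}_{2\gamma}$, which integrates to your target inequality.

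\textbf{Rigidity} must also be redone. Equality cannot force $|d\rho|_g\equiv1$, since that would mean $\bar T\equiv0$, i.e.\ $Q_1=0$. Instead, $Q_{2\gamma}\equiv Q^{\mathbb S}_{2\gamma}$ forces the $r^2$-coefficients of $\bar w$ and $\bar u$ to agree, i.e.\ $\hat J=\frac n2$. Integrating $\Delta_{\bar g}\bar T$ then gives $E_{\bar g}\equiv0$ and $\bar T\equiv2$. So $\bar\nabla^2\bar\rho+\bar g=0$ with $\bar\rho|_M=0$, and $(X,\bar g)$ is a Euclidean ball.

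\textbf{The case $Y_1=0$} is not addressed in your outline. There you must show $Y_{2\gamma}<0$ strictly, i.e.\ $Q_{2\gamma}\le0$ with $Q_{2\gamma}\not\equiv0$. This uses the barrier $\bar\rho^{\frac{n-2\gamma}{2}}$ together with the same Hopf and Obata argument.
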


Notice that $\lambda_1 (-\Delta_+)>\frac{n^2 -1}{4}> \frac{n^2 -(2\gamma)^2}{4}$ for $\gamma\in \left(\frac{1}{2},1\right)$, which satisfies the spectrum condition of \cite[Section 6]{CC}. Our next main result is the comparison theorem between $P^{\hat{g}}_{2}$ and $P^{\hat{g}}_{2\gamma}$.

\begin{theorem}\label{mth2}
Suppose $(X^{n+1},g_+)$ $(n\geq 3)$ is a Poincar\'{e}-Einstein manifold with conformal infinity $(M,[\hat{g}])$ of nonnegative Yamabe type. 
Then for all $\gamma\in \left(1,\min\{\frac{n}{2},2\}\right)$, 
\begin{equation}\label{ineq2}
\frac{Y_{2\gamma}(M,[\hat{g}])}{Y_{2\gamma}(\mathbb{S}^n,[g_{\mathbb{S}}])}
 \leq \left(\frac{Y_2(M,[\hat{g}])}{Y_2(\mathbb{S}^n,[g_{\mathbb{S}}])}\right)^{\gamma},
\end{equation}
and the equality holds if and only if $(X, g_+)$ is isometric to the hyperbolic space $\mathbb{H}^{n+1}$.
\end{theorem}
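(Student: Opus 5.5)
We prove Theorem \ref{mth2} by comparing $Y_{2\gamma}$ and $Y_2$ through a single test function, the Yamabe metric's own conformal factor. Since $(M,[\hat g])$ has nonnegative Yamabe type, $\lambda_1(-\Delta_+)\ge n^2/4$ by \cite{Le}, so $P_{2\gamma}$ is well defined for all $\gamma\in(1,\min\{n/2,2\})$. By the solution of the Yamabe problem there is a smooth representative $\hat g$ with constant scalar curvature, so $J^{\hat g}$ is a constant $J\ge0$ and
\[
Y_2(M,[\hat g])=\frac{n-2}{2}J\,\mathrm{Vol}(M,\hat g)^{2/n}.
\]
Inserting $f\equiv1$ into the variational characterization of $Y_{2\gamma}$ gives
\[
Y_{2\gamma}(M,[\hat g])\le \frac{\int_M P_{2\gamma}^{\hat g}1\,dv_{\hat g}}{\mathrm{Vol}(M,\hat g)^{(n-2\gamma)/n}}.
\]
The proof therefore reduces to the pointwise-in-average estimate
\begin{equation*}
\frac{1}{\mathrm{Vol}(M,\hat g)}\int_M P^{\hat g}_{2\gamma}1\,dv_{\hat g}\le \frac{\Gamma(\frac n2+\gamma)}{\Gamma(\frac n2-\gamma)}\Big(\frac{2J}{n(n-2)}\cdot\frac{n(n-2)}{2}\cdot\frac{2}{n}\Big)^{\gamma}
= \frac{\Gamma(\frac n2+\gamma)}{\Gamma(\frac n2-\gamma)}\Big(\frac{2J}{n}\Big)^{\gamma},
\end{equation*}
with the normalization chosen so that the round sphere ($J=n/2$) gives equality. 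Dividing by the sphere constants then yields (\ref{ineq2}), because $Y_2(\mathbb S^n)=\frac{n(n-2)}4|\mathbb S^n|^{2/n}$ and the volume powers match.

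\textbf{Step 1 (scaling).} Rescale $\hat g$ so that $J=n/2$, i.e.\ $R^{\hat g}=n(n-1)$; the case $J=0$ is handled separately or by a limit. With this normalization the target becomes
\[
\int_M P_{2\gamma}1\le \frac{\Gamma(\frac n2+\gamma)}{\Gamma(\frac n2-\gamma)}\,\mathrm{Vol}(M,\hat g).
\]

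\textbf{Step 2 (the scattering solution for constant data).} Let $u$ solve (\ref{pos}) with $f=1$ and $s=\frac n2+\gamma$, and write $u=r^{n-s}F+r^sG$. In the geodesic collar, $g_r=\hat g-r^2A^{\hat g}+O(r^4)$, so $\mathrm{tr}\,g_2=-J=-n/2$, exactly as in the hyperbolic model. Integrating by parts on $X_\epsilon=\{r>\epsilon\}$ identifies $\int_M G$ (hence $\int_M P_{2\gamma}1$) as a renormalized boundary flux. The natural comparison function is the model radial solution $\phi(r)$ on $\mathbb H^{n+1}$, written in terms of the hyperbolic defining function (e.g.\ a hypergeometric function in $t=r^2/4$-type variables), built from $\hat g$. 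I plan to use a Bochner/maximum-principle argument in the style of \cite{CLW,WZ1}: set $v=\log u$ or $w=u/\phi$ and show $w$ satisfies an elliptic inequality $\Delta_+w+b\cdot\nabla w\ge0$, or $\le0$. The sign of the relevant term should come from $Ric_{g_+}=-ng_+$ together with $\lambda_1\ge n^2/4$ and the positivity of $u$, which follows from nonnegative Yamabe type. Comparing the $r^s$ coefficients then gives $G\le G_{\mathbb H}\equiv\frac{\Gamma(\frac n2+\gamma)}{d_\gamma\Gamma(\frac n2-\gamma)}$ on average. Because $\gamma>1$, $d_\gamma$ changes sign relative to $\gamma<1$, so the direction of the inequality must be tracked carefully.

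\textbf{Step 3 (main obstacle).} Obtaining this one-sided estimate on $\int_M G$ for $\gamma\in(1,2)$ is the heart of the argument. For $\gamma<1$ the equation is second order and the Caffarelli--Silvestre-type energy is coercive. For $\gamma\in(1,2)$ I would instead use the extension of \cite{CC} via the compactified metric $\bar g=\rho^2g_+$ with the adapted defining function $\rho=u_{\gamma}^{1/(n-s)}$, following the approach of \cite{WZ1}. In this setting a weighted energy identity (Case--Chang type, fourth order) expresses $\int P_{2\gamma}1$ as an integral with a sign, plus boundary terms involving $J$. The key step is to show the interior integrand is nonnegative using the Einstein condition. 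I would first attempt the integral formula obtained from $\int_{X}\rho^{1-2\gamma}\,\cdots$ applied to $y=$ the model function, and apply Cauchy--Schwarz against $\int J$.

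\textbf{Step 4 (rigidity).} If equality holds in (\ref{ineq2}), every inequality above is an equality, so the interior integrand vanishes identically. This forces $\bar g$ to have vanishing traceless Hessian of the defining function. By the argument used for Theorem \ref{mth0} in \cite{WZ1}, $g_+$ is then hyperbolic and $(X,g_+)\cong\mathbb H^{n+1}$. Conversely, on $\mathbb H^{n+1}$ equality holds by the explicit formula for $Y_{2\gamma}(\mathbb S^n)$.
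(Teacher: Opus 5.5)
Your reduction is correct and matches the paper's. Take a Yamabe representative, normalize $\hat J=\frac n2$, and insert $f\equiv 1$; it then suffices to show $\int_M Q_{2\gamma}\,dv_{\hat g}\le Q^{\mathbb S}_{2\gamma}\,\mathrm{Vol}(M,\hat g)$, which the paper proves pointwise.

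The gap is that this estimate, which you yourself call the heart of the argument, is never established. Steps 2 and 3 list candidate tools: $\log u$, a quotient $u/\phi$ with $\phi$ unspecified, a Case--Chang energy with $\rho=u^{1/(n-s)}$, and Cauchy--Schwarz against $\int J$. None of them is shown to give an inequality with a definite sign, let alone with the sharp constant $\Gamma(\frac n2+\gamma)/\Gamma(\frac n2-\gamma)$.

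If $\phi$ is the hyperbolic profile composed with the geodesic defining function $r$ of $\hat g$, the plan fails. The function $r$ exists only on a collar, and $\Delta_+\phi(r)$ involves $\mathrm{tr}_{g_r}g_r'$, which the Einstein equation does not control globally. Building the defining function from the very solution $u$ you want to estimate gives Case--Chang's extension. However, you do not identify any inequality for it that would compare $G$ with the model value.

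The missing idea is a global defining function with two properties: its gradient is controlled by the Einstein condition, and its Laplacian is expressed through itself and its gradient exactly as in the model. The paper takes $\tilde\rho=\tilde v^{-1}$ with $\Delta_+\tilde v=(n+1)\tilde v$ and $\tilde v\sim r^{-1}$. Set $\tilde g=\tilde\rho^2g_+$ and $\tilde T=(1-|d\tilde\rho|^2_{\tilde g})/\tilde\rho^2$. Then $\Delta_+\tilde T=-\frac{2}{(n-1)^2}\tilde\rho^2|\tilde E|^2_{\tilde g}\le0$ and $\tilde T|_M=\frac2n\hat J=1$. Hence $\tilde\rho^{-2}|d\tilde\rho|^2_{g_+}\le1-\tilde\rho^2$ on all of $X$, with equality on $\mathbb H^{n+1}$.

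The paper then transplants the hyperbolic solution as $\tilde u=\psi(\tilde\rho)\tilde\rho^{\frac{n-2\gamma}{2}}$. The ODE Lemma~\ref{fneg} gives $\tilde A_\gamma>0$ on $(0,1)$ for $\gamma<\min\{\frac n2,2\}$, which turns the gradient bound into $\Delta_+\tilde u+\frac{n^2-4\gamma^2}{4}\tilde u\le0$. The maximum principle for $\tilde w/\tilde u$ then gives $\tilde w\le\tilde u$. The $r^2$ coefficients coincide because $\hat J=\frac n2$. Since $2<2\gamma<4$, the next term is $r^{2\gamma}$, so $Q_{2\gamma}\le Q^{\mathbb S}_{2\gamma}$ pointwise because $d_\gamma>0$.

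Two further points remain open in your proposal.
\begin{itemize}
\item The case $\hat J=0$ does not follow ``by a limit''. You need $Y_{2\gamma}<0$ strictly to exclude equality there, and the paper takes this from Case--Chang's sign results (Lemma~\ref{jnull}).
\item Rigidity is not automatic once $Q_{2\gamma}\equiv Q^{\mathbb S}_{2\gamma}$. The discrepancy between $\tilde w$ and $\tilde u$ sits at order $r^{2\gamma}$ at infinity, so no Hopf-type argument applies. The paper instead uses the Case--Chang weighted energy for $\tilde w/\tilde u$ on $(X,\tilde r^2g_+)$, with $\tilde r=\tilde u^{2/(n-2\gamma)}$ and weight $\tilde r^{1-2\gamma}$. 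This forces $\tilde w\equiv\tilde u$, hence $\tilde T\equiv1$ and $\tilde E=0$. The Obata equation $\tilde\nabla^2\tilde\rho+\tilde\rho\,\tilde g=0$ with $\tilde\rho|_M=0$ then gives the result.
\end{itemize}
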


Together with Theorem \ref{mth0}, Theorem \ref{mth1} and Theorem\ref{mth2}, we may partially give some evidence for the monotonicity of the fractional Yamabe constants.


\vspace{0.1in}
In this paper, we need a strong regularity assumption on the conformal compactification of $g_+$, i.e. $\rho^2 g_+$ extends smoothly to the boundary. The main reason is to make sure that the fractional GJMS operators can be well defined, see \cite{MM, JS, CDLS, CC}. For the proof of Theorem \ref{mth0}, \cite{CLW}, \cite{WZ1} and \cite{WW} are based on the idea of Gursky-Han\cite{GH} where they carefully deal with an integral term for some compact compactificated metric, i.e. $\bar{g}=\rho^2 g_+$, and 
\[
\int_{X}\rho |E_{\bar{g}}|^{2}_{\bar{g}} dv_{\bar{g}},
\]
where $E_{\bar{g}}$ is the trace-free Ricci for $\bar{g}$ and
\[
E_{\bar{g}}=-(n-1)\rho\left(\bar{\nabla}^{2} \rho-\frac{\Delta_{\bar{g}}\rho}{n+1}\bar{g}\right).
\]
However, for general $\gamma$, such method does not work well. We return to study the unique solution $u$ of the Poisson equation (\ref{pos}) with $f=1$ for the hyperbolic space. We find out that some properties of the solution $u$ still hold for Poincar\'{e}-Einstein manifold, especially the relationships of different solutions in terms of different $\gamma$. Then we can construct the needed test defining function according to the above properties and relationships. For more details, one can refer to Section \ref{model}. 

\vspace{0.1in} 
The paper is outlined as follows: In section \ref{model}, we mainly study the unique solution of the Poisson equation (\ref{pos}) for the hyperbolic space and their relationships in terms of different $\gamma$ which can be used to construct our test defining function. In section \ref{y1} and \ref{y2}, we prove Theorem \ref{mth1} and Theorem \ref{mth2} respectively. Indeed, we obtain the point-wise upper bounds for $Q_{2\gamma}$ under our assumptions which can directly imply our main theorems.

\vspace{0.1in}
\textbf{Acknowledgment:} This work was partially supported by the Fundamental Research Funds for the Central Universities (Grant No. WK0010000081).The author would like to thank professor Fang Wang and Zuoqin Wang for their valuable suggestions.

\vspace{0.2in}
\section{Hyperbolic Space $\mathbb{H}^{n+1}$}\label{model}

In this section, we mainly consider the hyperbolic space $\mathbb{H}^{n+1}(n\geq 3)$ to discover some interesting properties. Here the hyperbolic space is given by
\[
(\mathbb{H}^{n+1},g_{\mathbb{H}})=\left(\mathbb{B}^{n+1},\frac{4dz^2}{(1-|z|^2)^2}\right)
\]
with conformal infinity $(\mathbb{S}^{n},[g_{\mathbb{S}}])$, where $g_{\mathbb{S}}$ is the round metric on $\mathbb{S}^n$. Then the corresponding geodesic normal defining function is
\[
r=\frac{2(1-|z|)}{1+|z|}\in [0,2].
\]

Recall $s=\frac{n}{2}+\gamma$, let $u^{\mathbb{H}}_s $ be the unique solution satisfying the following equation:
\begin{equation}\label{h-u}
-\Delta_{\mathbb{H}}u^{\mathbb{H}}_{s}-s(n-s)u^{\mathbb{H}}_{s}=0, \quad r^{s-n}u^{\mathbb{H}}_{s}|_{\mathbb{S}^n}=1.
\end{equation}
In particular, we can solve equation (\ref{h-u}) and obtain that $u^{\mathbb{H}}_s$ satisfies 
\begin{itemize}
\item[(i)]  $u^{\mathbb{H}}_s$ is a smooth positive usual Hypergeometric function of $r$;
\item[(ii)] $u^{\mathbb{H}}_s$ is monotone increasing in $r$;
\item[(iii)] $u^{\mathbb{H}}_s$ has the asymptotics
\[
u^{\mathbb{H}}_{s}=r^{n-s}\left(1+ u^{\mathbb{S}}_2 r^2 +u^{\mathbb{S}}_4 r^4+o(r^4)\right)+r^s \left(u^{\mathbb{S}}_{2\gamma}+O(r^2)\right),\quad \textrm{as } r\to 0,
\]
where
\[
u^{\mathbb{S}}_2 =-\frac{n-2\gamma}{4(2\gamma-2)}J^{\mathbb{S}}, \quad  u^{\mathbb{S}}_{2\gamma}=\frac{n-2\gamma}{2d_{\gamma}}Q_{2\gamma}^{\mathbb{S}},
\]
\[
\begin{aligned}
u^{\mathbb{S}}_4
&=\frac{n-2\gamma}{16(2\gamma-2)(2\gamma-4)}\left(-\Delta_{g_{\mathbb{S}}}J^{\mathbb{S}}+\frac{n-2\gamma+4}{2}(J^{\mathbb{S}})^2 -(2\gamma-2)|A^{\mathbb{S}}|^{2}_{g_{\mathbb{S}}}\right) \\
&=\frac{n-2\gamma}{16(2\gamma-2)(2\gamma-4)}\left(\frac{n-2\gamma+4}{2}-\frac{2\gamma-2}{n}\right)(J^{\mathbb{S}})^2.
\end{aligned}
\]
Here $J^{\mathbb{S}}=J_{g_\mathbb{S}}=\frac{n}{2}$, $Q_{2\gamma}^{\mathbb{S}}=Q_{2\gamma}^{g_\mathbb{S}}$ and $A^{\mathbb{S}}=A_{g_{\mathbb{S}}}$ for short.
\end{itemize}
For more details, one can refer to \cite{Go,WZ2}.

\vspace{0.2in}
\subsection{Case $s_1 =\frac{n+1}{2}$} When $s=s_1 =\frac{n+1}{2}$ in equation (\ref{h-u}), we can obtain
\[
u^{\mathbb{H}}_{s_1}=\left(\frac{4r}{(2+r)^2}\right)^{\frac{n-1}{2}}=\left(\frac{1-|z|^2}{2}\right)^{\frac{n-1}{2}}. 
\]
Let $\rho^{\mathbb{H}}_{s_1}:=(u^{\mathbb{H}}_{s_1})^{\frac{2}{n-1}}$, then
\[
\rho^{\mathbb{H}}_{s_1}=\frac{4r}{(2+r)^2}=r\left(1- r+\frac{3}{4} r^2-\frac{1}{2} r^3 +O(r^4)\right).
\]
It is easy to see that $\rho^{\mathbb{H}}_{s_1}$ is monotone increasing in $r$ and $\rho^{\mathbb{H}}_{s_1}\in \left[0,\frac{1}{2}\right]$. Moreover, 
\[
(\rho^{\mathbb{H}}_{s_1})^{-2}|\nabla \rho^{\mathbb{H}}_{s_1}|_{g_{\mathbb{H}}}^2=1-2\rho^{\mathbb{H}}_{s_1}. 
\]

Take $\rho=\rho^{\mathbb{H}}_{s_1}$ for simplicity. For general $\gamma\in \left(0,1\right)$ and $s=\frac{n}{2}+\gamma$. Define
\begin{equation}\label{dphi}
\varphi(\rho):=\frac{u^{\mathbb{H}}_s }{(u^{\mathbb{H}}_{s_1})^{\frac{n-2\gamma}{n-1}}}=u^{\mathbb{H}}_s \cdot \rho^{-\frac{n-2\gamma}{2}},\quad \rho\in \left[0,\frac{1}{2}\right].
\end{equation}
Then $\varphi(\rho)$ is a smooth function of $\rho$ and $\varphi(0)=1$. $\varphi(\rho)$ also has asymptotics for $\gamma\in (1/2,1)$,
\[
\varphi(\rho)=1+\varphi_{1}\rho+\varphi_{2\gamma}\rho^{2\gamma}+\varphi_{2} \rho^2 + o\left(\rho^{2}\right),\quad \textrm{as }\rho\to 0,
\]
where
\[
\varphi_{1}=\frac{n-2\gamma}{2},\quad
\varphi_{2\gamma}=\frac{n-2\gamma}{2d_{\gamma}}Q_{2\gamma}^{g_{\mathbb{S}}},
\]
\[
\varphi_{2}=\frac{n-2\gamma}{8}+\frac{(n-2\gamma)(n-2\gamma-2)}{8}+u^{\mathbb{S}}_2,
\quad \textrm{with }
u^{\mathbb{S}}_2 =-\frac{n-2\gamma}{4(2\gamma-2)}J^{\mathbb{S}}.
\]
Moreover, $\varphi(\rho)$ satisfies
\begin{equation}\label{phiab}
\Delta_{g_\mathbb{H}}\left(\varphi(\rho)\rho^{\frac{n-2\gamma}{2}}\right)+\frac{n^2 -4\gamma^2}{4}\left(\varphi(\rho)\rho^{\frac{n-2\gamma}{2}}\right)=\rho^{\frac{n-2\gamma}{2}}\left( \bar{A}_{\gamma}(\rho)\rho^{-2}|d\rho|^{2}_{g_\mathbb{H}}+\bar{B}_{\gamma}(\rho)\right)=0,
\end{equation}
where
\[
\begin{aligned}
&\bar{A}_{\gamma}(\rho):=\frac{n-2\gamma}{2}\frac{1-2\gamma}{2}\varphi(\rho)+\left(\frac{n+3}{2}-2\gamma\right)\rho\varphi'(\rho)+\rho^{2}\varphi''(\rho),\\
&\bar{B}_{\gamma}(\rho):=\frac{n-2\gamma}{2}\frac{2\gamma-1}{2}\varphi(\rho)-\frac{n+1}{2}\rho\varphi'(\rho).\\
\end{aligned}
\]

\vspace{0.1in}
\begin{lemma}\label{bpos}
For $0<\rho<\frac{1}{2}$, we have
\begin{itemize}
\item if $\frac{1}{2}<\gamma<1$, then $\bar{B}_{\gamma}(\rho)>0$, and $\bar{A}_{\gamma}(\rho)<0$.
\item if $0<\gamma<\frac{1}{2}$, then $\bar{B}_{\gamma}(\rho)<0$, and $\bar{A}_{\gamma}(\rho)>0$.
\end{itemize}
\end{lemma}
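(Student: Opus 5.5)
The plan is to reduce the two sign claims to one by using the identity (\ref{phiab}) together with the explicit formula $\rho^{-2}|d\rho|^2_{g_{\mathbb{H}}}=1-2\rho$. Since $\rho^{\frac{n-2\gamma}{2}}>0$, (\ref{phiab}) gives
\[
\bar{B}_\gamma(\rho)=-(1-2\rho)\,\bar{A}_\gamma(\rho),\qquad 0<\rho<\tfrac12 .
\]
Because $1-2\rho>0$ on this interval, $\bar A_\gamma$ and $\bar B_\gamma$ have strictly opposite signs wherever either is nonzero. It therefore suffices to prove $\bar B_\gamma>0$ for $\gamma\in(\frac12,1)$ and $\bar B_\gamma<0$ for $\gamma\in(0,\frac12)$.

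To study $\bar B_\gamma$, set $c=\frac{(n-2\gamma)(2\gamma-1)}{4}$, $k=\frac{n+1}{2}$, $a=c/k$, and $\psi(\rho)=\rho^{-a}\varphi(\rho)$. Then
\[
\bar B_\gamma = c\varphi-k\rho\varphi'=-k\rho^{1+a}\psi'(\rho),
\]
so the sign of $\bar B_\gamma$ is the sign of $-\psi'$. Near $\rho=0$ we have $\bar B_\gamma(0)=c\varphi(0)=c$, since $\varphi(0)=1$. This is positive when $\gamma>\frac12$ and negative when $\gamma<\frac12$, so the claimed sign holds on some interval $(0,\epsilon)$.

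The key step is a ``one-way crossing'' property for zeros of $\psi'$, obtained from the ODE. Suppose $\psi'(\rho_0)=0$ for some $\rho_0\in(0,\frac12)$. Then $\bar B_\gamma(\rho_0)=0$, hence $\bar A_\gamma(\rho_0)=0$, and also $\varphi'(\rho_0)=a\varphi/\rho_0$. Substituting these into $\bar A_\gamma(\rho_0)=0$ and into
\[
\psi''=\rho^{-a}\Big(\varphi''-\tfrac{2a}{\rho}\varphi'+\tfrac{a(a+1)}{\rho^2}\varphi\Big),
\]
and using $c=ka$, a short computation should give
\[
\psi''(\rho_0)=\rho_0^{-a-2}\varphi(\rho_0)\,a(2\gamma-a).
\]
Here $\varphi>0$ because $u^{\mathbb H}_s>0$, and $2\gamma-a>0$ for $\gamma\in(0,1)$. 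Hence $\operatorname{sign}\psi''(\rho_0)=\operatorname{sign} a=\operatorname{sign}(2\gamma-1)$. For $\gamma>\frac12$, every zero of $\psi'$ is therefore a strict crossing from negative to positive. So if $\bar B_\gamma$ ever vanished in $(0,\frac12)$, it would vanish exactly once, and we would have $\bar B_\gamma<0$ on the whole remaining interval up to $\frac12$. The case $\gamma<\frac12$ is the mirror image, with all signs reversed.

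The main obstacle is ruling out that single sign change, which must come from the endpoint $\rho=\frac12$ (the center of the ball). There $\rho=\frac{1-|z|^2}{2}$ has $d\rho=0$. Since $u^{\mathbb H}_s$ is smooth and radial, it is a smooth function of $|z|^2=1-2\rho$, so $\varphi$ is smooth up to $\rho=\frac12$. The relation $\bar B_\gamma=-(1-2\rho)\bar A_\gamma$ then forces $\bar B_\gamma(\frac12)=0$, and differentiating gives $\bar B_\gamma'(\frac12)=2\bar A_\gamma(\frac12)$.

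I would first try to show that if $\bar B_\gamma$ had already turned negative, then $\psi'>0$ on $(\rho_0,\frac12)$ and $\psi'(\frac12)=0$. This would force $\psi''(\frac12)\le 0$. I would then evaluate the analogue of the crossing computation at the singular point, using the regular-singular ODE at $\rho=\frac12$ to express $\varphi''(\frac12)$ through $\varphi(\frac12)$ and $\varphi'(\frac12)=2a\varphi(\frac12)$, and aim to show it yields $\psi''(\frac12)>0$, a contradiction.

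If the endpoint computation proves too delicate, the fallback is a comparison argument on the ODE. I would treat $\bar B_\gamma$ as the solution of the first-order system coupling $\bar B_\gamma$ and $\psi$, and use the positivity of $\varphi$ on $[0,\frac12]$ together with the monotonicity of $u^{\mathbb H}_s$ in $r$ (property (ii)) to exclude a negative interval of $\bar B_\gamma$ adjacent to $\frac12$.
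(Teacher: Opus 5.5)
Your reduction to the sign of $\bar B_\gamma$ is correct, and so is your one-way crossing computation. The crossing step is the same as the paper's first step. Since $2\gamma-a=\frac{(n+2\gamma)(2\gamma+1)}{2(n+1)}$, your formula for $\psi''(\rho_0)$ is equivalent to the paper's $\rho^*\bar B_\gamma'(\rho^*)=-\frac{(n-2\gamma)(2\gamma-1)(n+2\gamma)(2\gamma+1)}{8(n+1)}\varphi(\rho^*)$. (Here $\psi=\rho^{-a}\varphi$ is your function, not the paper's $\psi$ of Section 2.2.)

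You diverge from the paper in how the single sign change is excluded. As written, that step is only a plan, but the endpoint computation you describe does close it. From $\bar B_\gamma(\tfrac12)=0$ you get $\varphi'(\tfrac12)=2a\varphi(\tfrac12)$. Solving $\bar B_\gamma'(\tfrac12)=2\bar A_\gamma(\tfrac12)$ for $\varphi''(\tfrac12)$ then yields
\[
\psi''(\tfrac12)=\frac{2^{a+3}a(2\gamma-a)}{n+3}\,\varphi(\tfrac12),\qquad
\bar B_\gamma'(\tfrac12)=-\frac{(n-2\gamma)(2\gamma-1)(n+2\gamma)(2\gamma+1)}{2(n+1)(n+3)}\,\varphi(\tfrac12).
\]
So for $\gamma>\frac12$, $\bar B_\gamma>0$ on a left neighbourhood of $\frac12$ as well as near $0$. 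Your one-way crossing then forbids any zero in between, and the case $\gamma<\frac12$ is the mirror argument. The fallback via monotonicity in $r$ is not needed.

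The paper instead stays in the interior. After the first zero $\rho^*$, $\bar B_\gamma$ is negative just past $\rho^*$ and vanishes at $\frac12$. Hence it has an interior minimum $\rho^{**}$ with $\bar B_\gamma'(\rho^{**})=0$ and $\bar B_\gamma(\rho^{**})<0$. Inserting these into $(1-2\rho)\bar A_\gamma+\bar B_\gamma=0$ gives $(1-2\rho^{**})(a-2\gamma)>0$, i.e.\ $\rho^{**}>\frac12$, a contradiction.

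The two routes trade off as follows:
\begin{itemize}
\item The paper's route uses only $\bar B_\gamma(\tfrac12)=0$. It never differentiates the identity at the degenerate point where the coefficient $1-2\rho$ vanishes.
\item Your route needs $\varphi\in C^3$ up to $\rho=\frac12$, which your smooth-radial-function argument correctly supplies. In return it gives the explicit sign of $\bar B_\gamma$ near the centre of the ball, and it shows that the interior and endpoint obstructions are governed by the same factor $a(2\gamma-a)$.
\end{itemize}
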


\begin{proof}
Here we only deal with case $\frac{1}{2}<\gamma<1$ since the proof of the other case is similar. Firstly recall 
\[
\rho^{-2}|d \rho|^{2}_{g_\mathbb{H}}=1-2\rho >0, \quad \textrm{as }\rho\in [0,1/2].
\]
Then $\bar{B}_{\gamma}(\frac{1}{2})=0$ according to equation (\ref{phiab}). Notice that $\varphi(0)=1$, we have
\[
\bar{B}_{\gamma}(0)=\frac{n-2\gamma}{2}\frac{2\gamma-1}{2}\varphi(0)=\frac{n-2\gamma}{2}\frac{2\gamma-1}{2}>0.
\]
If we suppose that $\bar{B}_{\gamma}(\rho)>0$ for $\rho\in (0,1/2)$ is not true, we must have $\bar{B}_{\gamma}(\rho)=0$ for some $0<\rho<\frac{1}{2}$. Assume that  $\rho^* $ is the first zero point of $\bar{B}_{\gamma}(\rho)$ with $0<\rho^*<\frac{1}{2}$. Then $\bar{B}_{\gamma}(\rho^*)=0$ and $\bar{B}'_{\gamma}(\rho^*)\leq 0$. Recall 
\[
\rho^{-2}|d \rho|^{2}_{g_\mathbb{H}}=1-2\rho >0, \quad \textrm{as }\rho\in (0,1/2).
\]
Then we know that $\bar{A}_{\gamma}(\rho^*)$ is also vanishing from equation (\ref{phiab}). Now we have
\begin{equation}\label{abe1}
\begin{aligned}
&\bar{A}_{\gamma}(\rho^*)=\frac{n-2\gamma}{2}\frac{1-2\gamma}{2}\varphi (\rho^*)+\left(\frac{n+3}{2}-2\gamma\right)\rho^* \varphi'(\rho^*)+(\rho^{*})^2\varphi''(\rho^*)=0,\\
&\bar{B}_{\gamma}(\rho^*)=\frac{n-2\gamma}{2}\frac{2\gamma-1}{2}\varphi(\rho^*)-\frac{n+1}{2}\rho^* \varphi'(\rho^*)=0.\\
\end{aligned}
\end{equation}
We compute the derivative of $\bar{B}_{\gamma}(\rho)$ at $\rho^*$, which satisfies
\[
\rho^* \bar{B}_{\gamma}'(\rho^*)=\frac{n-2\gamma}{2}\frac{2\gamma-1}{2}\rho^* \varphi'(\rho^*)-\frac{n+1}{2}\rho^* \varphi'(\rho^*)-\frac{n+1}{2}(\rho^*)^2\varphi''(\rho^*).
\]
Replacing $\varphi''(\rho^*)$ and $\varphi'(\rho^*)$ in the above equation according to equation (\ref{abe1}), we can obtain
\[
\rho^* \bar{B}_{\gamma}'(\rho^*)=-\frac{(n-2\gamma)(2\gamma-1)(n+2\gamma)(2\gamma+1)}{8(n+1)}\varphi(\rho^*)<0.
\]
Then by continuity of $\bar{B}_{\gamma}(\rho)$ and $\bar{B}_{\gamma}(\frac{1}{2})=\bar{B}_{\gamma}(\rho^*)=0$, there exists $\rho^{**}\in \left(\rho^*, \frac{1}{2}\right)$ such that 
\[\bar{B}'_{\gamma}(\rho^{**})=0,\quad \textrm{and}\quad \bar{B}_{\gamma}(\rho^{**})<0,\]
which are equivalent to respectively
\begin{equation}\label{**1}
\left(\frac{n-2\gamma}{2}\frac{2\gamma-1}{2}-\frac{n+1}{2}\right)\rho^{**} \varphi'(\rho^{**})-\frac{n+1}{2}(\rho^{**})^2\varphi''(\rho^{**})=0,
\end{equation}
and
\begin{equation}\label{**2}
\frac{n+1}{2}\rho^{**}\varphi'(\rho^{**})>\frac{(n-2\gamma)(2\gamma-1)}{4}\varphi(\rho^{**})>0.
\end{equation}
Recall equation (\ref{phiab}) and $\rho^{-2}|d \rho|^{2}_{g_\mathbb{H}}=1-2\rho$,  
\begin{equation}\label{**3}
\bar{A}_{\gamma}(\rho^{**})(1-2\rho^{**})+\bar{B}_{\gamma}(\rho^{**})=0.
\end{equation}
Inserting equation (\ref{**1}) into (\ref{**3}) with the definition of $\bar{A}_{\gamma}(\rho)$ and $\bar{B}_{\gamma}(\rho)$ to replace $\varphi''(\rho^{**})$, we have
\begin{equation}
\frac{(n-2\gamma)(1-2\gamma)}{2}\varphi(\rho^{**})=\left((1-2\rho^{**})\left(\frac{n+1}{2}-2\gamma+\frac{(n-2\gamma)(2\gamma-1)}{2(n+1)}\right)-\frac{n+1}{2}\right)\varphi'(\rho^{**}).
\end{equation}
Then together with equation (\ref{**2}), we have
\[
-(n+1)\rho^{**}<(1-2\rho^{**})\left(\frac{n+1}{2}-2\gamma+\frac{(n-2\gamma)(2\gamma-1)}{2(n+1)}\right)-\frac{n+1}{2}.
\]
Combining the coefficient of $\rho^{**}$ on both sides, we have
\[
\frac{(n+2\gamma)(2\gamma+1)}{n+1}\rho^{**}>\frac{(n+2\gamma)(2\gamma+1)}{2(n+1)}.
\]
Therefore
\[
\rho^{**}>\frac{1}{2}
\]
which is a contradiction with $\rho^* <\rho^{**}<\frac{1}{2}$. Thus we conclude that $\bar{B}_{\gamma}(\rho)$ has no zero point  when $0<\rho<\frac{1}{2}$, i.e.
\[
\bar{B}_{\gamma}(\rho)>0 \quad \textrm{as } 0<\rho<\frac{1}{2}.
\]
By equation (\ref{phiab}) and $\rho^{-2}|d \rho|^{2}_{g_\mathbb{H}}=1-2\rho>0$ again, we have
\[
\bar{A}_{\gamma}(\rho)<0 \quad \textrm{as } 0<\rho<\frac{1}{2}.
\]
\end{proof}

\vspace{0.1in}
\subsection{Case $s_2 =n+1$} When $s = s_2 =n+1$ in equation (\ref{h-u}), we can have
\[
u^{\mathbb{H}}_{s_2}=\frac{4+r^2}{4r}=\frac{1+|z|^2}{1-|z|^2}.
\]
Let $\rho^{\mathbb{H}}_{s_2}:=(u^{\mathbb{H}}_{s_2})^{-1}$, then
\[
\rho^{\mathbb{H}}_{s_2}=\frac{4r}{4+r^2}=r\left(1-\frac{1}{4} r^2 +\frac{1}{16} r^4+o(r^4)\right).
\]
It is easy to see that $\rho^{\mathbb{H}}_{s_2} \in [0,1]$ is also increasing in $r$. Moreover $\rho^{\mathbb{H}}_{s_2}$ satisfies
\[
(\rho^{\mathbb{H}}_{s_2})^{-2} |\nabla \rho^{\mathbb{H}}_{s_2}|_{g_{\mathbb{H}}}^2=1-(\rho^{\mathbb{H}}_{s_2})^2.
\]

Similarly take $\rho=\rho^{\mathbb{H}}_{s_2}$ for short. For general $\gamma\in \left(0,\min \{\frac{n}{2},2\}\right)$ and $s=\frac{n}{2}+\gamma$, define
\begin{equation}\label{dpsi}
\psi(\rho):=\frac{u^{\mathbb{H}}_s }{(u^{\mathbb{H}}_{s_2})^{-\frac{n-2\gamma}{2}}}=u^{\mathbb{H}}_s \cdot \rho^{-\frac{n-2\gamma}{2}}.
\end{equation}
Then $\psi(\rho)$ is a positive smooth function of $\rho$ and $\psi(0)=1$. $\psi(\rho)$ has the following asymptotic for $\gamma\in (1,\min \{n/2,2\})$,
\[
\psi(\rho)=1+\psi_2 \rho^2 +\psi_{2\gamma}\rho^{2\gamma}+\psi_4 \rho^4 +o(\rho^{4}),\quad \textrm{as }\rho\to 0,
\]
where
\[
\begin{aligned}
\psi_2 &=\frac{n-2\gamma}{8} +u^{\mathbb{S}}_2 ,\quad \psi_{2\gamma}=\frac{n-2\gamma}{2d_{\gamma}}Q_{2\gamma}^{\mathbb{S}},\\
\psi_4 &=-\frac{n-2\gamma}{32} +\frac{(n-2\gamma)(n-2\gamma+10)}{128}+\frac{n-2\gamma+4}{8} u^{\mathbb{S}}_2 +u^{\mathbb{S}}_4.\\
\end{aligned}
\]
Here 
\[
u^{\mathbb{S}}_2 =-\frac{n-2\gamma}{4(2\gamma-2)}J^{\mathbb{S}}, 
\quad
u^{\mathbb{S}}_4=\frac{n-2\gamma}{16(2\gamma-2)(2\gamma-4)}\left(\frac{n-2\gamma+4}{2}-\frac{2\gamma-2}{n}\right)(J^{\mathbb{S}})^2.
\]
Moreover, $\psi(\rho)$ satisfies
\begin{equation}\label{psi-ef}
\Delta_{\mathbb{H}}\left(\psi(\rho)\rho^{\frac{n-2\gamma}{2}}\right)+\frac{n^2 -4\gamma^2}{4}\left(\psi(\rho)\rho^{\frac{n-2\gamma}{2}}\right)=\rho^{\frac{n-2\gamma}{2}}\left( \tilde{A}_{\gamma}(\rho)\rho^{-2}|d\rho|^{2}_{\mathbb{H}}+\tilde{B}_{\gamma}(\rho)\right)=0,
\end{equation}
where
\[
\begin{aligned}
&\tilde{A}_{\gamma}(\rho):=\rho^2 \psi''(\rho)+(n-2\gamma+2)\rho\psi'(\rho)+\frac{(n-2\gamma)(n-2\gamma+2)}{4}\varphi(\rho),\\
&\tilde{B}_{\gamma}(\rho):=-(n+1)\rho\psi'(\rho)-\frac{(n-2\gamma)(n-2\gamma+2)}{4}\psi(\rho).\\
\end{aligned}
\]

\vspace{0.1in}
\begin{lemma}\label{fneg}
If $\gamma\in \left(0,\min \{\frac{n}{2},2\}\right)$, then
\[
\tilde{A}_{\gamma}(\rho)>0,\quad \tilde{B}_{\gamma}(\rho)<0, \quad \textrm{as } 0<\rho<1.
\]
\end{lemma}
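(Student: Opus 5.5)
We follow the strategy of Lemma \ref{bpos}. By (\ref{psi-ef}) and $\rho^{-2}|d\rho|^2_{\mathbb{H}}=1-\rho^2$ we have the identity
\[
(1-\rho^2)\tilde{A}_{\gamma}(\rho)+\tilde{B}_{\gamma}(\rho)=0,\qquad 0\le\rho\le 1,
\]
where in $\tilde A_\gamma$ the last term is understood with $\psi$. It therefore suffices to show $\tilde{B}_{\gamma}<0$ on $(0,1)$; then $\tilde{A}_{\gamma}=-\tilde{B}_{\gamma}/(1-\rho^2)>0$ there.

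\textbf{Endpoint values.} At $\rho=0$, since $\psi(0)=1$ and $\rho\psi'(\rho)\to0$ by the asymptotic expansion of $\psi$, we get
\[
\tilde{B}_{\gamma}(0)=-\tfrac{(n-2\gamma)(n-2\gamma+2)}{4}<0 .
\]
Here we use $\gamma<\min\{n/2,2\}$, so that $n-2\gamma>0$. At $\rho=1$ the factor $1-\rho^2$ vanishes. Since $\psi$ is smooth up to $\rho=1$ (the center $z=0$, where $u_s^{\mathbb H}$ is smooth, $r=2$, and $\rho$ is an even-type function), $\tilde A_\gamma(1)$ is finite, and the identity gives $\tilde{B}_{\gamma}(1)=0$.

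\textbf{Contradiction at a first zero.} Suppose $\tilde{B}_{\gamma}$ has a zero in $(0,1)$, and let $\rho^*$ be the first one. Then $\tilde{B}_{\gamma}'(\rho^*)\ge 0$, and $\tilde{A}_{\gamma}(\rho^*)=0$ by the identity. Write $c=\frac{(n-2\gamma)(n-2\gamma+2)}{4}>0$. The two conditions give:
\begin{itemize}
\item from $\tilde{B}_{\gamma}(\rho^*)=0$: $\rho^*\psi'(\rho^*)=-\frac{c}{n+1}\psi(\rho^*)$;
\item from $\tilde{A}_{\gamma}(\rho^*)=0$: $(\rho^*)^2\psi''(\rho^*)=-(n-2\gamma+2)\rho^*\psi'-c\psi$.
\end{itemize}
Differentiating $\tilde B_\gamma$,
\[
\rho\tilde{B}_{\gamma}'=-(n+1)\rho\psi'-(n+1)\rho^2\psi''-c\rho\psi' .
\]
Substituting the two relations gives
\[
\rho^*\tilde{B}_{\gamma}'(\rho^*)=\bigl[-(n+1)-c+(n+1)(n-2\gamma+2)\bigr]\bigl(-\tfrac{c}{n+1}\bigr)\psi+(n+1)c\psi .
\]
Simplifying the bracket and multiplying out, this becomes
\[
\rho^*\tilde{B}_{\gamma}'(\rho^*)=\tfrac{c}{n+1}\,\psi(\rho^*)\bigl(c-(n+1)(n-2\gamma)\bigr)+(n+1)c\,\psi(\rho^*).
\]
Its sign is governed by the expression $(n+1)^2+c-(n+1)(n-2\gamma+1)=(n+1)2\gamma+c$. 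This is positive, so the computed sign does \emph{not} give an immediate contradiction. The endpoint analysis is therefore what matters, and I would redo it carefully.

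\textbf{Second zero of the derivative.} Exactly as in Lemma \ref{bpos}, a first zero $\rho^*$ with $\tilde B_\gamma'(\rho^*)>0$ (the generic case) and $\tilde B_\gamma(1)=0$ force an interior point $\rho^{**}\in(\rho^*,1)$ with
\[
\tilde{B}_{\gamma}'(\rho^{**})=0,\qquad \tilde{B}_{\gamma}(\rho^{**})>0 .
\]
From $\tilde B_\gamma(\rho^{**})>0$ we get $-(n+1)\rho^{**}\psi'(\rho^{**})>c\,\psi(\rho^{**})>0$. We then use $\tilde B_\gamma'(\rho^{**})=0$ to eliminate $\psi''$ in $(1-\rho^2)\tilde A_\gamma+\tilde B_\gamma=0$. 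This yields a linear relation between $\psi(\rho^{**})$ and $\rho^{**}\psi'(\rho^{**})$ with coefficients polynomial in $\rho^{**}$. Combining it with the strict inequality, and comparing coefficients as in Lemma \ref{bpos}, should force $(\rho^{**})^2\ge 1$, contradicting $\rho^{**}<1$. The degenerate case $\tilde B_\gamma'(\rho^*)=0$ is handled by the same algebra applied at $\rho^*$ itself.

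\textbf{Main obstacle.} The hard step is this final algebraic elimination. The quadratic factor $1-\rho^2$, instead of the linear $1-2\rho$, makes the resulting inequality quadratic in $\rho^{**}$. The key point is to verify that the relevant constant factors as a positive multiple of $(1-(\rho^{**})^2)$ times something of definite sign, and this is where $0<\gamma<\min\{n/2,2\}$, i.e.\ $c>0$, should enter. I would first try to arrange the combination so that the terms collapse to $K\bigl(1-(\rho^{**})^2\bigr)\rho^{**}\psi'(\rho^{**})$ plus a term of fixed sign, with $K>0$. I would also double-check the sign bookkeeping at $\rho^*$, using positivity of $\psi$ on $[0,1]$, which follows from positivity of $u^{\mathbb H}_s$.
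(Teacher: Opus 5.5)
Your route is the paper's route: the identity $(1-\rho^2)\tilde{A}_{\gamma}+\tilde{B}_{\gamma}=0$, the endpoint values $\tilde{B}_{\gamma}(0)<0=\tilde{B}_{\gamma}(1)$, a first zero $\rho^*$, and then an interior positive maximum $\rho^{**}\in(\rho^*,1)$ of $\tilde{B}_{\gamma}$. Your evaluation at $\rho^*$ agrees with the paper's, giving $\rho^*\tilde{B}_{\gamma}'(\rho^*)=c\,\psi(\rho^*)\bigl(2\gamma+\tfrac{c}{n+1}\bigr)>0$. (Your intermediate display has a typo: it should read $(n+1)(n-2\gamma+1)$, not $(n+1)(n-2\gamma)$.) This value is not meant to contradict anything. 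Its role is to show $\tilde{B}_{\gamma}>0$ just to the right of $\rho^*$, which together with $\tilde{B}_{\gamma}(1)=0$ produces $\rho^{**}$. In particular, the ``degenerate case'' $\tilde{B}_{\gamma}'(\rho^*)=0$ you set aside never occurs, by your own computation.

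What is missing is the final elimination. It is the decisive step, and you only assert that it ``should force'' $(\rho^{**})^2\ge 1$. It closes with no quadratic difficulty. At $\rho=\rho^{**}$, the condition $\tilde{B}_{\gamma}'=0$ gives $\rho^2\psi''=-\frac{n+1+c}{n+1}\rho\psi'$. Hence $\tilde{A}_{\gamma}=K\rho\psi'+c\psi$ with $K=n-2\gamma+1-\frac{c}{n+1}$, and the identity becomes
\[
\bigl[(1-\rho^2)K-(n+1)\bigr]\rho\psi'(\rho)=c\rho^2\psi(\rho).
\]
From $\tilde{B}_{\gamma}(\rho)>0$ and $\psi>0$ you get $c\psi<-(n+1)\rho\psi'$. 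Hence $\rho\psi'<0$ and $c\rho^2\psi<-(n+1)\rho^2\cdot\rho\psi'$. Inserting the identity and dividing by $\rho\psi'<0$ gives
\[
(1-\rho^2)K-(n+1)>-(n+1)\rho^2,\quad\text{i.e.}\quad (1-\rho^2)\Bigl(-2\gamma-\frac{c}{n+1}\Bigr)>0,
\]
so $\rho^{**}>1$, a contradiction. The constant $2\gamma+\frac{c}{n+1}$ is the same one that appeared at $\rho^*$, and the hypothesis on $\gamma$ enters only through $\gamma>0$ and $c>0$. With this step written out, your argument is complete and coincides with the paper's proof.
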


\begin{proof}

First note that $\psi(0)=1$, then
\[
\tilde{B}_{\gamma}(\rho)=-\frac{(n-2\gamma)(n-2\gamma+2)}{4}\psi(0)=-\frac{(n-2\gamma)(n-2\gamma+2)}{4}<0,
\]
Recall equation (\ref{psi-ef}) and $\rho^{-2}|d \rho|^{2}_{\mathbb{H}}=1-\rho^2$, then $\tilde{B}_{\gamma}(1)=0$. If $\tilde{B}_{\gamma}(\rho)<0$ for $\rho\in (0,1)$ is not true, we must have $\tilde{B}_{\gamma}(\rho)=0$ for some $0<\rho<1$. Assume that $\rho^* \in (0,1)$ is the first zero point of $\tilde{B}_{\gamma}$. Then $\tilde{B}_{\gamma}(\rho^*)=0$, and $\tilde{B}_{\gamma}'(\rho^*)\geq 0$.  Recall 
\[
\rho^{-2}|d \rho|^{2}_{\mathbb{H}}=1-\rho^2 >0, \quad \textrm{as }\rho\in (0,1).
\]
Then $\tilde{A}_{\gamma}(\rho^*)=0$ by equation (\ref{psi-ef}) again.  
Thus we have
\begin{equation}\label{ef-1}
\begin{aligned}
&\tilde{A}_{\gamma}(\rho^*)=(\rho^*)^2 \psi''(\rho^*)+(n-2\gamma+2)\rho^*\psi'(\rho^*)+\frac{(n-2\gamma)(n-2\gamma+2)}{4}\psi(\rho^*)=0,\\
&\tilde{B}_{\gamma}(\rho^*)=-(n+1)\rho^* \psi'(\rho^*)-\frac{(n-2\gamma)(n-2\gamma+2)}{4}\psi(\rho^*)=0.\\
\end{aligned}
\end{equation}
By direct computation, the derivative of $\tilde{B}_{\gamma}(\rho)$ at $\rho^*$ satisfies
\[
\rho^* \tilde{B}_{\gamma}'(\rho^*)=-(n+1)(\rho^*)^2 \psi''(\rho^*)-(n+1)\rho^* \psi'(\rho^*)-\frac{(n-2\gamma)(n-2\gamma+2)}{4}\rho^*\psi'(\rho^*).
\]
Then we use equation (\ref{ef-1}) to replace $\psi''(\rho^*)$ and $\psi'(\rho^*)$ in the above equation,  
\[
\rho^* \tilde{B}_{\gamma}'(\rho^*)=\frac{(n-2\gamma)(n-2\gamma+2)}{4}\left(2\gamma+\frac{(n-2\gamma)(n-2\gamma+2)}{4(n+1)}\right)\psi(\rho^*)>0.
\]
Because of $\tilde{B}_{\gamma}(1)=\tilde{B}_{\gamma}(\rho^*)=0$ and the continuity of $\tilde{B}_{\gamma}(\rho)$, there exists $\rho^{**}\in (\rho^*, 1)$ such that 
\[\tilde{B}_{\gamma}'(\rho^{**})=0,\quad \tilde{B}_{\gamma}(\rho^{**})>0.\]
It follows that
\begin{equation}\label{***1}
\rho^{**} \tilde{B}_{\gamma}'(\rho^{**})=-(n+1)(\rho^{**})^2 \psi''(\rho^{**})-\left(n+1+\frac{(n-2\gamma)(n-2\gamma+2)}{4}\right)\rho^{**}\psi'(\rho^{**})=0,
\end{equation}
and
\begin{equation}\label{***2}
(n+1)\rho^{**}\psi'(\rho^{**})<-\frac{(n-2\gamma)(n-2\gamma+2)}{4}\psi(\rho^{**})<0.
\end{equation}
Recall equation (\ref{psi-ef}) and $\rho^{-2}|d \rho|^{2}_{\mathbb{H}}=1-\rho^2$, 
\begin{equation}\label{***3}
\tilde{A}_{\gamma}(\rho^{**})\left(1-(\rho^{**})^2\right)+\tilde{B}_{\gamma}(\rho^{**})=0.
\end{equation}
Inserting equation (\ref{***1}) into (\ref{***3}), we have
\begin{multline}
\frac{(n-2\gamma)(n-2\gamma+2)}{4}\rho^{**}\psi(\rho^{**})=\\
\left(\left (1-(\rho^{**})^2\right)\left(n-2\gamma+1-\frac{(n-2\gamma)(n-2\gamma+2)}{4(n+1)}\right)-(n+1)\right)\psi'(\rho^{**}).
\end{multline}
Then according to (\ref{***2}), we get
\[
-(n+1)(\rho^{**})^2 < \left (1-(\rho^{**})^2\right)\left(n-2\gamma+1-\frac{(n-2\gamma)(n-2\gamma+2)}{4(n+1)}\right)-(n+1),
\]
which is equivalent to
\[
\left(-2\gamma -\frac{(n-2\gamma)(n-2\gamma+2)}{4(n+1)}\right)\left(1-(\rho^{**})^2\right) >0.
\]
It follows that $\rho^{**}>1$ since
\[
-2\gamma -\frac{(n-2\gamma)(n-2\gamma+2)}{4(n+1)}\leq 0\quad \textrm{as } \gamma\in \left(0,\min \{\frac{n}{2},2\}\right).
\]
Thus we obtain a contradiction with $\rho^*<\rho^{**}<1$. Therefore $\tilde{B}_{\gamma}(\rho)$ has no zero point as $0<\rho<1$, i.e.
\[
\tilde{B}_{\gamma}(\rho)<0,\quad \textrm{as } 0<\rho<1.
\]
By equation (\ref{psi-ef}) and $\rho^{-2}|d \rho|^{2}_{\mathbb{H}}=1-\rho^2>0$ again, we obtain
\[
\tilde{A}_{\gamma}(\rho)>0,\quad \textrm{as } 0<\rho<1.
\]
\end{proof}

\vspace{0.2in}
\section{$Y_{1} (M,[\hat{g}])$ vs. $Y_{2\gamma} (M,[\hat{g}])$}\label{y1}
In this section, for Poincar\'{e}-Einstein manifold $(X^{n+1},g_+)(n\geq 3)$ with conformal infinity $(M,[\hat{g}])$, we assume that $Y_1 (M,[\hat{g}])\geq 0$ is attained by $\hat{g}$ and $r$ is the geodesic defining function for $\hat{g}$. Take $Q_{2\gamma} = Q_{2\gamma}^{\hat{g}}$ and $\hat{J}=J_{\hat{g}}$ for short.

\begin{itemize}
\item Let $\bar{v}$ satisfy the linear equation
\[\Delta_+ \bar{v} +\frac{n^2 -1}{4}\bar{v}=0,\quad \bar{v}\sim r^{\frac{n-1}{2}} \textrm{ as }r\to 0.\]
Then $\bar{v}$ has asymptotics
\[
\begin{aligned}
&\bar{v}=r^{\frac{n-1}{2}}(1+\bar{v}_1 r +\bar{v}_2 r^2 +\bar{v}_3 r^3+ O(r^4)),\\
&\bar{v}_1 = -\frac{n-1}{2}Q_1,\\
&\bar{v}_2 = \frac{n-1}{4}\hat{J}.\\
\end{aligned}
\]
\item Let $\bar{\rho}=\bar{v}^{\frac{2}{n-1}}$, then
\[
\begin{aligned}
&\bar{\rho}=r(1+\bar{\rho}_1 r +\bar{\rho}_2 r^2 +\bar{\rho}_3 r^3+ O(r^4)),\\
&\bar{\rho}_1 = -Q_1,\\
&\bar{\rho}_2 = \frac{1}{2}\hat{J}-\frac{n-3}{4}Q_{1}^2.\\
\end{aligned}
\]
\end{itemize}

Let $\bar{g}=\bar{\rho}^2 g_+$, 
define
$$\bar{T}:=\frac{1-|d\bar{\rho}|_{\bar{g}}^2}{\bar{\rho}}.$$
Then
\[
\Delta_{\bar{g}}\bar{\rho}=-\frac{n+1}{2}\bar{T}.
\]
\begin{lemma}\label{btlem}
$\bar{T}$ satisfies 
\begin{equation}\label{bte}
\Delta_{\bar{g}}\bar{T}=-\frac{2}{(n-1)^2}\bar{\rho} |\bar{E}|_{\bar{g}}^2,\quad \bar{T}|_{M}=2 Q_1.
\end{equation}
where $\bar{E}=E_{\bar{g}}$ for short and $\bar{T}$ has asymptotics
\[
\bar{T}=2Q_1 +(nQ_{1}^2 -2\hat{J})r+O(r^2),\quad \textrm{as }r\to 0.
\]
Moreover, if $Q_1$ is constant, then 
\[
\bar{T}\geq 2Q_1, \quad \textrm{and}\quad \hat{J}\leq \frac{n}{2}Q_{1}^2.
\]
Either one of the equalities holds if and only if $(X^{n+1},g_+)$ is isometric to the hyperbolic space.
\end{lemma}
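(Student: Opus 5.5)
Since $g_+=\bar\rho^{-2}\bar g$ is Einstein, the conformal change formulas give the trace-free Ricci and scalar curvature of $\bar g$ in terms of $\bar\rho$, $|d\bar\rho|^2_{\bar g}$, $\bar\nabla^2\bar\rho$ and $\Delta_{\bar g}\bar\rho$. I will use these to derive the equation for $\bar T$, then read off the asymptotics and apply the maximum principle.

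\textbf{Step 1 (the equation for $\Delta_{\bar g}\bar\rho$).} Write $\bar v=\bar\rho^{\frac{n-1}{2}}$. The equation $\Delta_+\bar v+\frac{n^2-1}{4}\bar v=0$, rewritten in terms of $\bar g=\bar\rho^2g_+$, becomes
\[
\bar\rho\,\Delta_{\bar g}\bar\rho=\frac{n+1}{2}\bigl(|d\bar\rho|^2_{\bar g}-1\bigr),
\]
that is, $\Delta_{\bar g}\bar\rho=-\frac{n+1}{2}\bar T$, as stated before the lemma.

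\textbf{Step 2 (the equation for $\bar T$).} From the Einstein condition, $E_{\bar g}=-(n-1)\bar\rho^{-1}\bigl(\bar\nabla^2\bar\rho-\frac{\Delta_{\bar g}\bar\rho}{n+1}\bar g\bigr)$. Here $\bar\rho^{-1}$ is the correct power: $E_{\bar g}$ equals the trace-free part of $-(n-1)\bar\rho^{-1}\bar\nabla^2\bar\rho$. The scalar curvature of $\bar g$ is expressible through $\bar T$ and $\Delta\bar\rho$.

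The contracted Bianchi identity for $\bar g$ gives $\delta E=\frac{n-1}{2(n+1)}dR$. Combine this with the Bochner formula
\[
\tfrac12\Delta|d\bar\rho|^2=|\bar\nabla^2\bar\rho|^2+\langle d\bar\rho,d\Delta\bar\rho\rangle+Ric(\nabla\bar\rho,\nabla\bar\rho).
\]
Substitute $|d\bar\rho|^2=1-\bar\rho\bar T$ and $\Delta\bar\rho=-\frac{n+1}{2}\bar T$. Split $|\bar\nabla^2\bar\rho|^2$ into its trace part plus $\frac{\bar\rho^2}{(n-1)^2}|\bar E|^2$, and use the Ricci expression.

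After this substitution all first-order terms in $\bar T$ should cancel, leaving $\Delta_{\bar g}\bar T=-\frac{2}{(n-1)^2}\bar\rho|\bar E|^2$. This is the Gursky--Han/CLW type computation, and I expect it to be the main obstacle: the algebra must close with exactly this coefficient. I would carry it out in a local frame, keeping track of how $\bar\nabla^2\bar\rho(\nabla\bar\rho,\cdot)=-\frac12 d(\bar\rho\bar T)$ enters.

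\textbf{Step 3 (boundary value and asymptotics).} Use $\bar\rho=r(1+\bar\rho_1r+\bar\rho_2r^2+\cdots)$ and $\bar g=(\bar\rho/r)^2(dr^2+g_r)$ with $g_r=\hat g-r^2A^{\hat g}+\cdots$. Then
\[
|d\bar\rho|^2_{\bar g}=(r/\bar\rho)^2\bigl((\partial_r\bar\rho)^2+|d_M\bar\rho|^2_{g_r}\bigr).
\]
Expanding to order $r^2$ gives $1-2\bar\rho_1r+(\ldots)r^2$, hence $\bar T=2Q_1+(nQ_1^2-2\hat J)r+O(r^2)$ after inserting $\bar\rho_1=-Q_1$ and $\bar\rho_2=\frac12\hat J-\frac{n-3}{4}Q_1^2$.

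The $|d_M\bar\rho|^2$ term is $O(r^4)$ relative to the others, so it does not contribute at this order. The $r$-coefficient comes from $(\partial_r\bar\rho)^2$, which equals $1+4\bar\rho_1r+(6\bar\rho_2+4\bar\rho_1^2)r^2$, divided by $(1+\bar\rho_1r+\bar\rho_2r^2)^2$.

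\textbf{Step 4 (maximum principle and rigidity).} If $Q_1$ is constant, then $\bar T$ is superharmonic on $\overline X$ with constant boundary value $2Q_1$, so $\bar T\ge 2Q_1$ by the minimum principle. Consequently the first-order coefficient $nQ_1^2-2\hat J\ge0$, which gives $\hat J\le\frac n2Q_1^2$.

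\emph{Rigidity.} Suppose $\bar T\equiv 2Q_1$ somewhere in the interior, or the Hopf lemma fails at the boundary, which is exactly the case $\hat J=\frac n2Q_1^2$ at a point, since Hopf requires $\partial_r\bar T>0$ unless $\bar T$ is constant. Then $\bar T$ is constant, so $|\bar E|\equiv0$. Thus $\bar\nabla^2\bar\rho=\frac{\Delta\bar\rho}{n+1}\bar g$ with $\Delta\bar\rho$ constant.

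By Obata/Tashiro-type arguments, a compact manifold with boundary carrying such a concircular function is a geodesic ball in a space form. Since $g_+=\bar\rho^{-2}\bar g$, this identifies $g_+$ with $\mathbb H^{n+1}$. Conversely, the hyperbolic model has $\bar\rho=\rho^{\mathbb H}_{s_1}$ and $E=0$, so both equalities hold there.
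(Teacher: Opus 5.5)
Your proposal is correct and follows essentially the same route as the paper: the Bochner-type computation of $\Delta_{\bar g}\bar T$ (which the paper only calls a direct calculation, and where you are right that $E_{\bar g}$ carries the factor $\bar\rho^{-1}$, not $\bar\rho$), the boundary expansion, the minimum principle, and then $\bar E=0$, $\bar\nabla^2\bar\rho+Q_1\bar g=0$ and an Obata-type argument; the only variation is in the rigidity step, where you force $\bar T\equiv 2Q_1$ via the strong maximum principle and Hopf lemma (so equality $\hat J=\frac n2Q_1^2$ at a single boundary point already suffices), whereas the paper integrates the equation over $X$ and uses $\int_M\partial_{\bar\nu}\bar T\,dv_{\hat g}=-\int_M(nQ_1^2-2\hat J)\,dv_{\hat g}=0$. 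One sign slip in Step 3: the expansion is $|d\bar\rho|^2_{\bar g}=1+2\bar\rho_1r+(4\bar\rho_2-\bar\rho_1^2)r^2+O(r^3)$, i.e.\ $1-2Q_1r+\cdots$ rather than $1-2\bar\rho_1r$, and with this sign your stated asymptotics $\bar T=2Q_1+(nQ_1^2-2\hat J)r+O(r^2)$ do follow.
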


\begin{proof}
By direct calculation, we can have equation (\ref{bte}) and the asymptotics of $\bar{T}$. If $Q_1$ is constant, the maximum principle tells us that $\bar{T}\geq 2Q_1$ and the Hopf  Lemma implies that
\[
nQ_{1}^2 -2\hat{J}\geq 0.
\]

If the equality holds, i.e. $nQ_{1}^2 -2\hat{J}= 0$. We integrate over $X$ for both side of equation (\ref{bte}) and obtain
\[
-\frac{2}{(n-1)^2}\int_{X}\bar{\rho} |\bar{E}|_{\bar{g}}^2 ~dv_{\bar{g}}=\int_{X}\Delta_{\bar{g}}\bar{T}dv_{\bar{g}}=\int_{M}\frac{\partial \bar{T}}{\partial \bar{\nu}} dv_{\hat{g}}=\int_{M} -(nQ_{1}^2 -2\hat{J})dv_{\hat{g}}=0.
\]
where $\bar{\nu}$ is the outer unit normal vector for $\bar{g}$.
Therefore we must have $\bar{E}=0$ and $\bar{T}=2Q_1$. It is easy to compute that
\[
\bar{E}=-(n-1)\bar{\rho}\left(\bar{\nabla}^{2} \bar{\rho}-\frac{\Delta_{\bar{g}}\bar{\rho}}{n+1}\bar{g}\right),\quad
\Delta_{\bar{g}}\bar{\rho}=-\frac{n+1}{2}\bar{T}=-(n+1)Q_1
\] 
which implies that
\[
\bar{\nabla}^{2} \bar{\rho}+Q_1 \bar{g}=0,\quad \bar{\rho}|_M =0,
\] 
where $\bar{\nabla}^{2}$ is the Hessian operator for $\bar{g}$.
Then according to the standard argument of Obata equation or \cite[Lemma 2.2]{WZ1}, $Q_1$ must be a positive constant and $(X,\bar{g})$ is isometric to the Euclidean ball, i.e. $(X^{n+1},g_+)$ must be isometric to the hyperbolic space.
\end{proof}

\begin{lemma}\label{q1null}
Let $\gamma\in (\frac{1}{2},1)$, if $Q_1 =0$, then
\[
Q_{2\gamma}\leq 0.
\]
Moreover, there exists a point $p\in M$ such that  $Q_{2\gamma}(p)<0$.
\end{lemma}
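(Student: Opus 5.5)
The plan is to compare the scattering solution $u$ with boundary data $1$ against an explicit test function built from $\bar{\rho}$, mimicking $\varphi$ of Section \ref{model} with $\varphi\equiv 1$. Throughout, $s=\frac{n}{2}+\gamma$, $a:=\frac{n-2\gamma}{2}=n-s$, and $L:=\Delta_+ + s(n-s)$, so that $s(n-s)=a(n-a)=\frac{n^2-4\gamma^2}{4}$.

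\textbf{Step 1: the test function.} Set $w:=\bar{\rho}^{a}$. For $\bar{g}=\bar{\rho}^2 g_+$ one has the general formula
\[
\Delta_+ \bar{\rho}^{a}=\bar{\rho}^{a}\left(a\bar{\rho}\Delta_{\bar{g}}\bar{\rho}+a(a-n)|d\bar{\rho}|^2_{\bar{g}}\right).
\]
We insert $|d\bar{\rho}|^2_{\bar{g}}=1-\bar{\rho}\bar{T}$ and $\Delta_{\bar{g}}\bar{\rho}=-\frac{n+1}{2}\bar{T}$. The zeroth-order terms then cancel against $a(n-a)$, and a short computation should give
\[
L w=\frac{(2\gamma-1)a}{2}\,\bar{\rho}^{a+1}\bar{T}.
\]
Since $Q_1=0$ is constant, Lemma \ref{btlem} gives $\bar{T}\geq 2Q_1=0$. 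Hence $Lw\geq 0$ when $\gamma>\frac12$.

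\textbf{Step 2: asymptotics.} Because $\bar{\rho}_1=-Q_1=0$, we get $w=r^{n-s}\left(1+a\bar{\rho}_2r^2+O(r^3)\right)$, with no $r^{s}$ term. The scattering solution is $u=r^{n-s}F+r^sG$ with $F|_M=1$, and its $r^{n-s}$ expansion has no odd $r^1$ term. Therefore
\[
u-w=r^{s}\,S(s)1+O(r^{n-s+2}),
\]
and $n-s+2>s$ precisely because $\gamma<1$. So the leading boundary behavior of $u-w$ is $r^s S(s)1$.

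\textbf{Step 3: Green's identity.} I would apply Green's identity on $\{r>\epsilon\}$ to the pair $u$, $w$, using $Lu=0$:
\[
\int_{r>\epsilon}u\,Lw\,dv_{g_+}=\oint_{r=\epsilon}\left(u\,\partial_\nu w-w\,\partial_\nu u\right).
\]
The $r^{n-s}\cdot r^{n-s}$ terms cancel in the Wronskian. The cross terms between $r^{n-s}$ and $r^s$ produce, as $\epsilon\to0$, the quantity $-(2s-n)\int_M S(s)1\,dv_{\hat g}$ up to a positive normalization. The remaining terms vanish, since $2(n-s)+2-n>0$.

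Consequently $\int_M S(s)1\leq 0$ when $u>0$. Positivity of $u$ follows from $\lambda_1>\frac{n^2-1}{4}\geq s(n-s)$, by the standard argument for the scattering solution. Since $d_\gamma<0$ for $\gamma\in(0,1)$, this yields $\int_M Q_{2\gamma}\geq 0$ — the wrong direction for a pointwise statement, so the integral identity alone is not enough.

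\textbf{Step 4: pointwise sign via the maximum principle.} To get the pointwise bound, I would instead use the maximum principle directly on $v:=u-w$. We have $(-\Delta_+-s(n-s))v=Lw\geq0$. Moreover $v=O(r^{n-s+2})$ lies in $L^2(X,g_+)$, because $n-s+2>\frac n2$, and similarly for its gradient. The spectral gap then gives $v\geq0$; pairing the inequality with $v^-$ shows $v^-=0$. From Step 2, $v\geq 0$ near $M$ forces $S(s)1\geq0$ pointwise, so $P_{2\gamma}1=d_\gamma S(s)1\leq0$, i.e.\ $Q_{2\gamma}\leq 0$. (This supersedes the integral sign computed in Step 3, which I would double-check.)

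\textbf{Step 5: strictness.} Suppose $Q_{2\gamma}\equiv 0$, i.e.\ $S(s)1\equiv0$. Then the Green identity of Step 3 gives $\int_X u\,\bar{\rho}^{a+1}\bar{T}=0$ with $u>0$, so $\bar{T}\equiv 0=2Q_1$. By the equality case of Lemma \ref{btlem}, $(X,g_+)$ is hyperbolic. But the hyperbolic space has $Q_1>0$, contradicting $Q_1=0$. Hence $Q_{2\gamma}(p)<0$ at some point $p\in M$.

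\textbf{Main obstacle.} I expect the hardest part to be justifying the maximum principle and the boundary terms rigorously. This means checking that $v$ and $\nabla v$ have the needed integrability, and tracking the exact signs and constants in the Wronskian boundary term.
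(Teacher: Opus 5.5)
Your proof is correct, and for the inequality $Q_{2\gamma}\le 0$ it is essentially the paper's proof. Your test function $w=\bar\rho^{a}$ is exactly the paper's $\bar\xi=\bar v^{\frac{n-2\gamma}{n-1}}$. Your identity $Lw=\frac{(2\gamma-1)a}{2}\bar\rho^{a+1}\bar T$ is the paper's supersolution identity, with $1-|d\bar\rho|^2_{\bar g}=\bar\rho\bar T\ge 0$ coming from Lemma \ref{btlem}. The paper also concludes from the comparison (its $\bar w$, which is your $u$, satisfies $\bar w\ge\bar\xi$) by isolating the $r^{2\gamma}$ coefficient; this works because $\bar\rho_1=-Q_1=0$ and $\gamma<1$.

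The differences are in execution.
\begin{itemize}
\item \emph{Comparison step.} The paper compares through the ratio $\bar w/\bar\xi$ and the maximum principle, which presupposes that the scattering solution is positive. Your $H^1$ spectral-gap argument on $v=u-w$ gives $u\ge w>0$ directly, so positivity of $u$ comes for free.
\item \emph{Strictness.} The paper assumes $Q_{2\gamma}\equiv 0$ and compares the $r^2$ coefficients of $\bar w$ and $\bar\xi$ to get $\hat J\ge 0$. It then combines this with the Hopf bound $\hat J\le \frac n2 Q_1^2=0$ of Lemma \ref{btlem} to land in the rigidity case. You instead use the integrated Wronskian to force $\bar T\equiv 0$ in $X$.
\end{itemize}
Your strictness route needs neither the second-order coefficients nor the boundary bound on $\hat J$. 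It can also be finished more simply than you do: $\bar T\equiv 0$ gives $\Delta_{\bar g}\bar\rho=0$ with $\bar\rho|_M=0$, hence $\bar\rho\equiv 0$, a contradiction. This avoids the Obata-type equality case. As you phrase it, that case would also need the remark that $P_1$ is positive on the round conformal class, since ``the hyperbolic space has $Q_1>0$'' only refers to the round representative. The paper's route, by contrast, stays purely local at $M$ once $\bar w\ge\bar\xi$ is known.

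Two corrections.
\begin{itemize}
\item \emph{Sign in Step 3.} On $\{r=\epsilon\}$ the outward normal is $-r\partial_r$ and the area element is $r^{-n}dv_{g_r}$. So the boundary term tends to $(s-(n-s))\int_M S(s)1\,dv_{\hat g}$, i.e.\ $\int_X u\,Lw\,dv_{g_+}=2\gamma\int_M S(s)1\,dv_{\hat g}\ge 0$. Hence $\int_M Q_{2\gamma}\,dv_{\hat g}\le 0$, which is consistent with Step 4. As you wrote them, Steps 3 and 4 together would force $S(s)1\equiv 0$ and contradict Step 5. Step 5 only uses that the right-hand side vanishes, so it survives.
\item \emph{Decay of $v$.} Since $v=r^{s}S(s)1+O(r^{n-s+2})$, it is only $O(r^{s})$, not $O(r^{n-s+2})$. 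It still lies in $H^1(X,g_+)$ because $s>\frac n2$, so Step 4 is unaffected.
\end{itemize}
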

\begin{proof}
Let $\bar{w}$ satisfy
\[
\Delta_+ \bar{w}+\frac{n^2 -(2\gamma)^2}{4}\bar{w} = 0,\quad \bar{w}\sim r^{\frac{n-2\gamma}{2}} \textrm{ as }r\to 0.
\]
Then $\bar{w}$ has the asymptotics 
\[
\bar{w}=r^{\frac{n-2\gamma}{2}}\left(1+ \bar{w}_{2\gamma}r^{2\gamma}+\bar{w}_2 r^2 +o(r^2)\right),
\]
where
\[
\begin{aligned}
\bar{w}_{2\gamma}&=\frac{n-2\gamma}{2d_{\gamma}}Q_{2\gamma},\\
\bar{w}_2 &=-\frac{n-2\gamma}{4(2\gamma-2)}\hat{J}.\\
\end{aligned}
\]
Recall that $\bar{v}$ satisfies 
\[
\Delta_+ \bar{v}+\frac{n^2 -1}{4}\bar{v}=0,
\]
and 
\[
\bar{v}=r^{\frac{n-1}{2}}\left(1 +\frac{n-1}{4}\hat{J}r^2 +o(r^2)\right),
\]
since $Q_1 =0$. Then $\bar{g}=\bar{\rho}^{2}g_+$ with $\bar{\rho}=\bar{v}^{\frac{2}{n-1}}$. According to Lemma \ref{btlem}, we have
\[1-\frac{4}{(n-1)^2}\bar{v}^{-2}|d \bar{v}|_{+}^2=1-|d\bar{\rho}|_{\bar{g}}^2 \geq 0.\]
Define
\[
\bar{\xi}:= \bar{v}^{\frac{n-2\gamma}{n-1}}.
\]
Then
\[
\bar{\xi}=r^{\frac{n-1}{2}}\left(1+ \frac{n-2\gamma}{4}\hat{J} r^2 +o(r^2)\right).
\]
By direct computation, we have
\[
\Delta_+ \bar{\xi}+\frac{n^2 -(2\gamma)^2}{4}\bar{\xi} =\frac{(n-2\gamma)(2\gamma-1)}{4}\bar{\xi} \left(1-\frac{4}{(n-1)^2}\bar{v}^{-2}|d \bar{v}|_{+}^2 \right)\geq 0.
\]
Note that $\frac{\bar{w}}{\bar{\xi}}$ satisfies the equation:
\[
-\Delta_+ \left(\frac{\bar{w}}{\bar{\xi}}\right)=\left(\frac{n^2 -(2\gamma)^2}{4}+\frac{\Delta_+ \bar{\xi}}{\bar{\xi}}\right)\frac{\bar{w}}{\bar{\xi}}+2\left\langle d\left(\frac{\bar{w}}{\bar{\xi}}\right),\frac{d \bar{w}}{\bar{w}}\right\rangle_{g_+},\quad \left(\frac{\bar{w}}{\bar{\xi}}\right)\big{|}_{M}=1.
\]
Notice that $\frac{\bar{w}}{\bar{\xi}}>0$. Applying maximum principle, $\frac{\bar{w}}{\bar{\xi}}\geq 1$, i.e. $\bar{w}\geq \bar{\xi}$. Thus near the boundary, 
\[
1+\bar{w}_{2\gamma}r^{2\gamma} +\bar{w}_2 r^2 +o(r^2)\geq 1 +\frac{n-2\gamma}{4}\hat{J}  r^2 +o(r^2).
\]
Therefore $\bar{w}_{2\gamma}\geq 0$ which implies that $Q_{2\gamma}\leq 0$ since $d_\gamma<0$.

At last, if $Q_{2\gamma}\equiv0$, then we have
\[
\bar{w}_2 =-\frac{n-2\gamma}{4(2\gamma-2)}\hat{J} \geq \frac{n-2\gamma}{4}\hat{J},
\]
which is equivalent to
\[
\frac{(n-2\gamma)(2\gamma-1)}{4(2-2\gamma)}\hat{J}\geq 0.
\]
Thus $\hat{J}\geq 0$. However, by Lemma \ref{btlem}, $\hat{J}\leq \frac{n}{2}Q_{1}^2 =0$. Therefore we have
\[
\hat{J}=\frac{n}{2}Q_{1}^2=0.
\]
By the proof of Lemma \ref{btlem} again, we obtain $Q_1$ is a positive constant which is a contradiction. Thus there exists a point $p\in M$ such that  $Q_{2\gamma}(p)<0$.
\end{proof}

\begin{proposition}\label{qpn1}
Let $\gamma\in (\frac{1}{2},1)$, if $Q_1$ is a positive constant, then
\[
\frac{Q_{2\gamma}}{Q_{2\gamma}^{\mathbb{S}}}\leq \frac{Q_{1}}{Q_{1}^{\mathbb{S}}}.
\]
The equality holds if and only if $(X^{n+1},g_+)$ is isometric to the hyperbolic space.
\end{proposition}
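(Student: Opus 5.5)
Normalize by scaling. Since $Q_1$ is a positive constant and $Q_1^{\mathbb{S}}=\frac{2}{n-1}\frac{\Gamma(\frac{n+1}{2})}{\Gamma(\frac{n-1}{2})}=1$, I would replace $\hat g$ by $Q_1^{2}\hat g$. Under this scaling $Q_1$ and $Q_{2\gamma}/Q_{2\gamma}^{\mathbb{S}}$ change by the factors $\lambda^{-1}$ and $\lambda^{-2\gamma}$ (with $\hat g\mapsto\lambda^2\hat g$). So it suffices to prove $Q_{2\gamma}\le Q_{2\gamma}^{\mathbb{S}}$ under the normalization $Q_1=1$, together with the equality case.

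\emph{Step 1: a geometric comparison for $\bar\rho$.} With $Q_1=1$, Lemma \ref{btlem} gives $\bar T\ge 2$, i.e. $|d\bar\rho|^2_{\bar g}\le 1-2\bar\rho$. Equivalently,
\[
\bar\rho^{-2}|d\bar\rho|^2_{g_+}\le 1-2\bar\rho .
\]
In particular $0<\bar\rho\le \frac12$ in $X$, which is exactly the range on which the model function $\varphi$ of (\ref{dphi}) and Lemma \ref{bpos} live.

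\emph{Step 2: a subsolution transplanted from $\mathbb{H}^{n+1}$.} I would set $\bar\xi:=\varphi(\bar\rho)\,\bar\rho^{\frac{n-2\gamma}{2}}$. The computation behind (\ref{phiab}) only uses that $\bar\rho^{\frac{n-1}{2}}$ solves $\Delta_+\bar v+\frac{n^2-1}{4}\bar v=0$, and that is true for $\bar v$ here. Hence
\[
\Delta_+\bar\xi+\tfrac{n^2-4\gamma^2}{4}\bar\xi=\bar\rho^{\frac{n-2\gamma}{2}}\big(\bar A_\gamma(\bar\rho)\,\bar\rho^{-2}|d\bar\rho|^2_{g_+}+\bar B_\gamma(\bar\rho)\big).
\]
By Lemma \ref{bpos}, $\bar A_\gamma<0$ on $(0,\frac12)$. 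Combined with Step 1, the bracket is at least $\bar A_\gamma(\bar\rho)(1-2\bar\rho)+\bar B_\gamma(\bar\rho)$, and this vanishes identically by the model identity (\ref{phiab}). Thus $\bar\xi$ is a positive subsolution.

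\emph{Step 3: comparison and expansion.} Let $\bar w$ be as in Lemma \ref{q1null}, i.e. the solution with $\bar w\sim r^{\frac{n-2\gamma}{2}}$. Then $\bar w/\bar\xi\to1$ on $M$. Exactly as in Lemma \ref{q1null}, the equation for $\bar w/\bar\xi$ and the maximum principle, using $\lambda_1(-\Delta_+)>\frac{n^2-1}{4}$ for positivity of $\bar w$, give $\bar w\ge\bar\xi$.

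Next I would expand $\bar\xi$ in $r$. Using $\bar\rho=r(1-r+\bar\rho_2r^2+\dots)$ and $\varphi=1+\varphi_1\rho+\varphi_{2\gamma}\rho^{2\gamma}+\dots$ with $\varphi_1=\frac{n-2\gamma}{2}$, the order-$r$ terms cancel:
\[
-\tfrac{n-2\gamma}{2}r+\tfrac{n-2\gamma}{2}r=0 .
\]
Therefore $\bar\xi=r^{\frac{n-2\gamma}{2}}\big(1+\varphi_{2\gamma}r^{2\gamma}+O(r^2)\big)$. Since $2\gamma<2$, comparing with $\bar w$ gives $\bar w_{2\gamma}\ge\varphi_{2\gamma}$. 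Because $d_\gamma<0$, this yields $Q_{2\gamma}\le Q_{2\gamma}^{\mathbb{S}}$.

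\emph{Step 4: equality, the main obstacle.} If $(X,g_+)$ is hyperbolic, equality is clear. For the converse, a boundary Hopf lemma for $\bar w/\bar\xi-1$ is awkward because the operator is degenerate. Instead I would mimic Lemma \ref{q1null}: assume $Q_{2\gamma}\equiv Q_{2\gamma}^{\mathbb{S}}$ after normalization, so the $r^{2\gamma}$ terms agree. Then $\bar w\ge\bar\xi$ forces the $r^2$ coefficient inequality $\bar w_2\ge[\bar\xi]_2$.

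Computing $[\bar\xi]_2$ from $\varphi_2$ and $\bar\rho_2=\frac12\hat J-\frac{n-3}{4}$, and using $\bar w_2=-\frac{n-2\gamma}{4(2\gamma-2)}\hat J$, this should reduce to a bound of the form $c_\gamma(\hat J-\frac n2)\ge0$ with $c_\gamma>0$. Combined with $\hat J\le\frac n2Q_1^2=\frac n2$ from Lemma \ref{btlem}, that gives equality $\hat J=\frac n2Q_1^2$, and Lemma \ref{btlem} then yields that $(X,g_+)$ is hyperbolic.

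Verifying the sign of $c_\gamma$ (it should match the sphere, where everything is an equality) is the delicate computation. If it fails, the fallback is to argue directly that $\bar w\equiv\bar\xi$ by the strong maximum principle, which forces $\bar\rho^{-2}|d\bar\rho|^2_{g_+}=1-2\bar\rho$ wherever $\bar A_\gamma<0$, hence $\bar T\equiv2$, and again Lemma \ref{btlem} applies.
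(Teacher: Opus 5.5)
Your proposal is correct and is essentially the paper's proof: the same transplanted subsolution $\varphi(\bar\rho)\bar\rho^{\frac{n-2\gamma}{2}}$ (the paper calls it $\bar u$), the same maximum-principle comparison with $\bar w$, and the same rigidity argument via the $r^2$ coefficients, $\hat J\le\frac{n}{2}$ and Lemma \ref{btlem}. Like the paper, your normalization $Q_1=1$ actually proves the scale-invariant form $Q_{2\gamma}/Q_{2\gamma}^{\mathbb{S}}\le (Q_1/Q_1^{\mathbb{S}})^{2\gamma}$, and your remark that the inequality needs only $2\gamma<2$ is slightly cleaner than the paper, which invokes $\bar w_2\le\bar u_2$ at that point. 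The computation you left open does close: since $\bar\rho$ and $\rho^{\mathbb{H}}_{s_1}$ first differ at order $r^3$, with $\bar\rho_2-\frac{3}{4}=\frac{1}{2}(\hat J-\frac{n}{2})$, you get $[\bar\xi]_2=u_2^{\mathbb{S}}+\frac{n-2\gamma}{4}(\hat J-\frac{n}{2})$ (if you go through $\varphi_2$ instead, its middle term should read $\frac{(n-2\gamma)(n-2\gamma+2)}{8}$), hence $\bar w_2-[\bar\xi]_2=\frac{(n-2\gamma)(2\gamma-1)}{8(1-\gamma)}(\hat J-\frac{n}{2})$, whose coefficient is positive for $\gamma\in(\frac{1}{2},1)$; so the strong-maximum-principle fallback is not needed, which is just as well, since it would require an interior point where $\bar w=\bar\xi$ that nothing in your argument supplies.
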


\begin{proof}
Indeed we may assume that $Q_1 =Q_{1}^{\mathbb{S}}=1$ by conformal transformation. Let $\bar{g}=\bar{\rho}^2 g_+$ and recall
\[
\bar{T}=\frac{1-|d\bar{\rho}|_{\bar{g}}^2}{\bar{\rho}}\geq 2 Q_1=2.
\] 
Then $\bar{\rho}^{-2}|d \bar{\rho}|_{+}^2=|d\bar{\rho}|_{\bar{g}}^2=1-T\bar{\rho}^2\leq 1-2\bar{\rho}$, i.e. $\bar{\rho}\leq \frac{1}{2}$. Now we can define 
$$\bar{u}:=\varphi(\bar{\rho})\bar{\rho}^{\frac{n-2\gamma}{2}},\quad \bar{\rho}\in \left[0,\frac{1}{2}\right].$$
Here $\varphi(\bar{\rho})$ is defined by equation (\ref{dphi}).
Then $\bar{u}$ has asymptotics
\[
\bar{u}=r^{\frac{n}{2}-\gamma}\left(1+\bar{u}_{2\gamma}r^{2\gamma}+\bar{u}_2 r^2 +o(r^2)\right),\quad \textrm{as } r\to 0.
\]
where
\[
\begin{aligned}
\bar{u}_{2\gamma}&=\frac{n-2\gamma}{2d_{\gamma}}Q_{2\gamma}^{\mathbb{S}},\\
\bar{u}_2 &=\frac{n-2\gamma}{4}(\hat{J} -J^{\mathbb{S}})-\frac{n-2\gamma}{4(2\gamma-2)}J^{\mathbb{S}}.
\end{aligned}
\]
Moreover it is easy to check that
\[
\Delta_+ \bar{u}+\frac{n^2 -(2\gamma)^2}{4}\bar{u}=\rho^{\frac{n-2\gamma}{2}}\left(\bar{A}_{\gamma}(\bar{\rho})\bar{\rho}^{-2}|d\bar{\rho}|_{+}^2 +\bar{B}_{\gamma}(\bar{\rho})\right),
\]
where $\bar{A}_\gamma (\bar{\rho})$ and $\bar{B}_{\gamma}(\bar{\rho})$ are defined by equation (\ref{phiab}). Then $\bar{T}\geq 2$ implies that
\[
\bar{A}_{\gamma}(\bar{\rho})\bar{\rho}^{-2}|d\bar{\rho}|_{+}^2 +\bar{B}_{\gamma}(\bar{\rho})\geq \bar{A}_{\gamma}(\bar{\rho})\cdot(1-2\bar{\rho})+\bar{B}_{\gamma}(\bar{\rho})=0
\]
since $\bar{A}_{\gamma}(\bar{\rho})\leq 0$ by Lemma \ref{bpos}. Therefore
\begin{equation}\label{b-u}
\Delta_+ \bar{u}+\frac{n^2 -(2\gamma)^2}{4}\bar{u} \geq 0.
\end{equation}
Let $\bar{w}$ satisfy the linear equation
\[
\Delta_+ \bar{w}+\frac{n^2 -(2\gamma)^2}{4}\bar{w} = 0,\quad \bar{w}\sim r^{\frac{n-2\gamma}{2}} \textrm{ as }r\to 0.
\]
Then $w$ has asymptotics as follow
\[
\bar{w}=r^{\frac{n-2\gamma}{2}}\left(1+ \bar{w}_{2\gamma}r^{2\gamma}+\bar{w}_2 r^2 +o(r^2)\right),
\]
where
\[
\begin{aligned}
\bar{w}_{2\gamma}&=\frac{n-2\gamma}{2d_{\gamma}}Q_{2\gamma},\\
\bar{w}_2 &=-\frac{n-2\gamma}{4(2\gamma-2)}\hat{J}.\\
\end{aligned}
\]
Since $Q_1=1$ and $\hat{J}\leq \frac{n}{2}$, it is easy to check that
\[
\bar{w}_2 \leq \bar{u}_2 \quad \Leftrightarrow\quad \frac{2\gamma-1}{2(1-\gamma)}\left(\hat{J}-\frac{n}{2}\right)\leq 0.
\]
Note that $\frac{\bar{w}}{\bar{u}}$ satisfies the equation:
\[
-\Delta_+ \left(\frac{\bar{w}}{\bar{u}}\right)=\left(\frac{n^2 -(2\gamma)^2}{4}+\frac{\Delta_+ \bar{u}}{\bar{u}}\right)\frac{\bar{w}}{\bar{u}}+2\left\langle d\left(\frac{\bar{w}}{\bar{u}}\right),\frac{d \bar{w}}{\bar{w}}\right\rangle_{g_+},\quad \left(\frac{\bar{w}}{\bar{u}}\right)\big{|}_{M}=1.
\]
Notice that $\frac{\bar{w}}{\bar{u}}>0$. Applying maximum principle according to equation (\ref{b-u}), we show that $\frac{\bar{w}}{\bar{u}}$ can not attain an interior positive minimum. Hence $\frac{\bar{w}}{\bar{u}}\geq 1$, i.e. $\bar{w}\geq \bar{u}$. Thus near the boundary, 
\[
1+\bar{w}_{2\gamma}r^{2\gamma} +\bar{w}_2 r^2 +o(r^2)\geq 1 +\bar{u}_{2\gamma} r^{2\gamma}+\bar{u}_2 r^2 +o(r^2).
\]
By $\bar{w}_2 \leq \bar{u}_2$, we must have $\bar{w}_{2\gamma} \geq \bar{u}_{2\gamma}$, i.e. on $M$,
\[
Q_{2\gamma}\leq Q_{2\gamma}^{\mathbb{S}}.
\]
since $d_\gamma <0$.

If $Q_{2\gamma}= Q_{2\gamma}^{\mathbb{S}}$, then we must have $\bar{w}_2 = \bar{u}_2$ which implies that 
$ \hat{J}=\frac{n}{2}= \frac{n}{2} Q_{1}^2$. Thus by Lemma \ref{btlem}, $(X^{n+1},g_+)$ is isometric to the hyperbolic space.
\end{proof}

\vspace{0.2in}
Now we are ready to prove Theorem \ref{mth1}.

\begin{proof}[Proof of Theorem \ref{mth1}]
Firstly, we assume that $Y_1 (M,[\hat{g}])$ is attained by $\hat{g}$. If $Y_1 (M,[\hat{g}])=0$, then $Q_1 =0$ for $\hat{g}$. By Lemma \ref{q1null}, $Q_{2\gamma} \leq 0$ and $Q_{2\gamma}(p)<0$ for some $p\in M$. Thus
\[
Y_{2\gamma}(M,[\hat{g}])< 0.
\]

If $Y_1 (M,[\hat{g}])>0$, by conformal transformation, we assume $Q_{1}=1$ in terms of $\hat{g}$, then
\[
Y_1 (M,[\hat{g}])=\frac{n-1}{2}\textrm{Vol}(M,\hat{g})^{\frac{1}{n}}\quad \Rightarrow \textrm{Vol}(M,\hat{g})^{\frac{1}{n}}=\frac{2}{n-1}Y_1 (M,[\hat{g}]).
\]
It follows that
\begin{equation}\label{prey1}
Y_{2\gamma}(M,[\hat{g}])\leq \frac{n-2\gamma}{2}Q_{2\gamma}^{\mathbb{S}}\textrm{Vol}(M,\hat{g})^{\frac{2\gamma}{n}}
= \left(\frac{2}{n-1}\right)^{2\gamma}\frac{\Gamma\left(\frac{n}{2}+\gamma\right)}{\Gamma\left(\frac{n}{2}-\gamma\right)}\left(Y_{1} (M,[\hat{g}])\right)^{2\gamma},
\end{equation}
where the first inequality dues to Proposition \ref{qpn1}.

If the equality of (\ref{prey1}) holds, we must have $Q_{2\gamma}= Q_{2\gamma}^{\mathbb{S}} $. Then by Proposition \ref{qpn1} again, $(X^{n+1},g_+)$ must be isometric to the hyperbolic space.
\end{proof}

\vspace{0.2in}
\section{$Y_{2}(M,[\hat{g}])$ vs. $Y_{2\gamma}(M,[\hat{g}])$}\label{y2}

In this part, for Poincar\'{e}-Einstein manifold $(X^{n+1},g_+)(n\geq 3)$ with conformal infinity $(M,[\hat{g}])$, we assume that $Y_{2}(M,[\hat{g}])\geq 0$ is attained by $\hat{g}$ and $r$ is the geodesic defining function for $\hat{g}$. Take $Q_{2\gamma} = Q_{2\gamma}^{\hat{g}}$, $\hat{J}=J^{\hat{g}}$ and $\hat{A}=A^{\hat{g}}$ for short.

\begin{itemize}
\item Let $\tilde{v}$ satisfy the linear equation
\[
\Delta_+ \tilde{v}-(n+1)\tilde{v}=0, \quad \tilde{v}\sim r^{-1} \textrm{ as } r\to 0.
\] 
Then 
\[
\tilde{v}=r^{-1}\left(1+\tilde{v}_2 r^2 +\tilde{v}_4 r^4 +o(r^4)\right),\quad \textrm{as } r\to 0.
\]
where
\[
\begin{aligned}
&\tilde{v}_2 =\frac{1}{2n}\hat{J},\\
&\tilde{v}_{4}=\frac{1}{8n(n-2)}\left(-\hat{J}^2 +n|\hat{A}|^{2}\right).
\end{aligned}
\]
\item Let $\tilde{\rho}:=\tilde{v}^{-1}$, then
\[
\tilde{\rho}=r\left(1+\tilde{\rho}_2 r^2 +\tilde{\rho}_4 r^4 +o(r^4)\right),\quad \textrm{as } r\to 0.
\]
where
\[
\begin{aligned}
&\tilde{\rho}_2 =-\frac{1}{2n}\hat{J},\\
&\tilde{\rho}_{4}=-\frac{1}{8n(n-2)}\left(-\hat{J}^2 +n|\hat{A}|^{2}\right)+\frac{1}{4n^2}\hat{J}^2.
\end{aligned}
\]
\item Let $\tilde{g}=\tilde{\rho}^2 g_+$, and denote
\[
\tilde{T}=\frac{1-|d\tilde{\rho}|_{\tilde{g}}^2}{\tilde{\rho}^{2}},
\]
then it is easy to compute that $\tilde{T}$ satisfies 
\[
\Delta_{\tilde{g}}\tilde{\rho}=-(n+1)\tilde{T}
\]
and as well as in \cite{Le,CC},
\begin{equation}\label{tee}
\Delta_+ \tilde{T}=-\frac{2}{(n-1)^2}\tilde{\rho}^2 |\tilde{E}|_{\tilde{g}}^2, \quad \tilde{T}|_M =\frac{2}{n}\hat{J},    
\end{equation}
where $\tilde{E}=E_{\tilde{g}}$ for short.
\end{itemize}

\begin{lemma}\label{jnull}
Let $\delta \in (0,1)$ and $\gamma\in (1,\min\{n/2,2\})$, if $\hat{J}=0$, then
\[
Q_{2\gamma} \leq 0 \leq Q_{2\delta}.
\]
Moreover, there exists a point $p\in M$ such that $Q_{2\gamma}<0$. 
\end{lemma}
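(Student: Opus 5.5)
Following Lemma \ref{q1null}, I would compare the scattering solution with an explicit power of $\tilde v$ and read off the sign of $Q_{2\gamma}$ (resp. $Q_{2\delta}$) from the asymptotics, using the maximum principle for a quotient.

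\textbf{Step 1.} Since $\hat J=0$, the boundary value of $\tilde T$ in (\ref{tee}) is $0$. Because $\Delta_+\tilde T\le 0$, the maximum principle gives $\tilde T\ge 0$. Hence
\[
\tilde v^{-2}|d\tilde v|_+^2=|d\tilde\rho|^2_{\tilde g}=1-\tilde T\tilde\rho^2\le 1 .
\]
Equality $\tilde T\equiv0$ forces $\tilde E=0$ and $\Delta_{\tilde g}\tilde\rho=0$, so $\tilde\nabla^2\tilde\rho=0$ with $\tilde\rho|_M=0$. This is impossible for a defining function on a compact manifold. So $\tilde T>0$ in the interior, by the strong maximum principle.

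\textbf{Step 2.} For $\sigma\in\{\gamma,\delta\}$ set $\tilde\xi:=\tilde v^{-\frac{n-2\sigma}{2}}=\tilde\rho^{\frac{n-2\sigma}{2}}$. Write $a=\frac{n-2\sigma}{2}$ and use $\Delta_+\tilde v=(n+1)\tilde v$. A direct computation gives
\[
\Delta_+\tilde\xi+\tfrac{n^2-4\sigma^2}{4}\tilde\xi
=\tilde\xi\Big(-a(n+1)+a(a+1)\tilde v^{-2}|d\tilde v|_+^2+\tfrac{n^2-4\sigma^2}{4}\Big).
\]
Since $\frac{n^2-4\sigma^2}{4}-a(n+1)+a(a+1)=0$, the right side equals
\[
-\frac{(n-2\sigma)(n-2\sigma+2)}{4}\,\tilde\xi\,\tilde T\tilde\rho^2\le 0
\]
for every $\sigma$. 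Because $\hat J=0$, we have $\tilde\rho=r(1+O(r^4))$, so $\tilde\xi=r^{\frac{n-2\sigma}{2}}(1+O(r^4))$.

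The scattering solution $\bar w_\sigma$ satisfies
\[
\bar w_\sigma=r^{\frac{n-2\sigma}{2}}\big(1+\bar w_{2\sigma}r^{2\sigma}+\bar w_2r^2+\cdots\big),\qquad \bar w_2\propto\hat J=0,\qquad \bar w_{2\sigma}=\tfrac{n-2\sigma}{2d_\sigma}Q_{2\sigma}.
\]
The quotient $\bar w_\sigma/\tilde\xi$ has boundary value $1$. Running the quotient maximum principle as in Lemma \ref{q1null}, but with $\tilde\xi$ now a supersolution, gives $\bar w_\sigma\le\tilde\xi$. This step needs positivity of $\bar w_\sigma$, which follows from the nonnegative Yamabe type and the spectral gap $\lambda_1\ge n^2/4$. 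Comparing expansions gives $\bar w_{2\sigma}r^{2\sigma}+o(r^{2\sigma})\le O(r^4)$. Since $2\sigma<4$, this yields $\bar w_{2\sigma}\le0$. For $\sigma=\delta\in(0,1)$ we have $d_\delta<0$, so $Q_{2\delta}\ge0$. For $\sigma=\gamma\in(1,2)$ we have $d_\gamma=2^{2\gamma}\Gamma(\gamma)/\Gamma(-\gamma)>0$, so $Q_{2\gamma}\le0$.

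\textbf{Step 3 (strictness, the main obstacle).} Suppose $Q_{2\gamma}\equiv0$. Then $\bar w_\gamma-\tilde\xi=r^{\frac{n-2\gamma}{2}}O(r^4)$ vanishes to high order, while it is a nonpositive supersolution-type function. I would apply the Hopf lemma to $\bar w_\gamma/\tilde\xi$, which is $\le1$ and attains its boundary maximum $1$. This requires identifying its next-order coefficient, namely the $r^4$ terms, which involve $\tilde v_4\propto|\hat A|^2$ against the corresponding coefficient of $\bar w_\gamma$. One should get a strict sign unless $\hat A\equiv0$ and $\tilde T\equiv0$ near $M$. Alternatively one can argue via the strong maximum principle: since the quotient equation has the strictly negative forcing $-\frac{(n-2\gamma)(n-2\gamma+2)}{4}\tilde T\tilde\rho^2<0$ from Step 1, equality of leading orders at the boundary contradicts Hopf. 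Either route forces $\tilde T\equiv0$, contradicting Step 1. Hence $Q_{2\gamma}(p)<0$ for some $p$. Making the Hopf argument rigorous at the degenerate (weighted) boundary is where I expect the real work.
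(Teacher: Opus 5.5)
\textbf{Steps 1 and 2.} These are correct and are essentially the paper's argument. The paper also compares the scattering solution with $\tilde\xi=\tilde v^{-\frac{n-2\sigma}{2}}$, uses $\Delta_+\tilde\xi+\frac{n^2-4\sigma^2}{4}\tilde\xi=-\frac{(n-2\sigma)(n-2\sigma+2)}{4}\tilde\xi\,\tilde T\tilde\rho^2\le 0$ with the quotient maximum principle, and reads off the sign of the $r^{2\sigma}$ coefficient. It writes this out only for $\gamma$ and calls the $\delta$ case similar. The paper gives no argument of its own for the strict part: it quotes \cite[Corollary 6.5, Corollary 6.6]{CC}. Your Step 3 is where the proposal has a genuine gap.

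\textbf{The gap in Step 3.} Your first route cannot close. If $Q_{2\gamma}\equiv 0$, comparing the $r^4$ coefficients of $\tilde w$ and $\tilde\xi$ gives only $|E^{\hat g}|^2\le 0$, i.e.\ $\hat g$ is Ricci-flat. It does not give $\tilde T\equiv 0$ near $M$. For Ricci-flat $\hat g$, every locally determined coefficient agrees with that of the local model $r^{-2}(dr^2+\hat g)$, where $\tilde v=r^{-1}$ and $\tilde T\equiv 0$. So no finite-order boundary comparison can produce a contradiction. Such infinities genuinely occur, e.g.\ flat tori for Horowitz--Myers type metrics.

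Your second route is the right idea, but the boundary Hopf lemma is exactly the missing step. It is elementary here, because a radial barrier on a whole collar suffices.
\begin{itemize}
\item[(a)] Put $U=\tilde w/\tilde\xi$, $V=1-U\ge 0$ and $L=\Delta_++2\langle d\,\cdot\,,d\log\tilde\xi\rangle_{g_+}$.
\item[(b)] Then $LV=-\frac{(n-2\gamma)(n-2\gamma+2)}{4}\tilde T\tilde\rho^2U<0$ in $X$, using $\tilde w>0$ and your $\tilde T>0$. At an interior zero of $V$ one would have $LV\ge 0$, so $V>0$ in $X$.
\item[(c)] In the collar, $L=r^2\partial_r^2+(1-2\gamma+O(r^2))\,r\partial_r$ plus tangential terms that annihilate functions of $r$. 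Its indicial roots are $0$ and $2\gamma$.
\item[(d)] For $h=r^{2\gamma}+r^{\beta}$ with $\beta\in(2\gamma,2\gamma+2)$ one gets $Lh=\beta(\beta-2\gamma)r^{\beta}+O(r^{2\gamma+2})>0$ on $\{r<r_0\}$.
\item[(e)] Choose $\epsilon$ with $\epsilon h\le\min_{\{r=r_0\}}V$. Then $V-\epsilon h\ge 0$ on $M\cup\{r=r_0\}$ and $L(V-\epsilon h)<0$, so it has no negative interior minimum. Hence $V\ge \epsilon r^{2\gamma}$ near $M$.
\item[(f)] Since $V=-\tilde w_{2\gamma}r^{2\gamma}+O(r^4)$ and $2\gamma<4$, this gives $\tilde w_{2\gamma}\le-\epsilon$.
\end{itemize}
Thus $Q_{2\gamma}<0$ at every point of $M$. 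This is stronger than the existence of $p$, and it makes your proof self-contained instead of relying on \cite{CC}.
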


\begin{proof}
The case $\gamma\in (1,\min\{n/2,2\})$ has been proved in \cite[Corollary 6.5, Corollary 6.6]{CC}.
Here we provide another proof of $Q_{2\gamma} \leq 0$ for case $\gamma\in (1,\min\{n/2,2\})$, In addition, we omit the proof of $ Q_{2\delta}\geq 0$ which is similar to our proof method. 
Let $\tilde{w}$ satisfy
\[
\Delta_+ \tilde{w}+\frac{n^2 -(2\gamma)^2}{4}\tilde{w} = 0,\quad \tilde{w}\sim r^{\frac{n-2\gamma}{2}} \textrm{ as }r\to 0.
\]
Then $w$ has the asymptotics 
\[
\tilde{w}=r^{\frac{n-2\gamma}{2}}\left(1 +\tilde{w}_{2\gamma}r^{2\gamma}+ \tilde{w}_4 r^4 +o(r^4)\right),
\]
where
\[
\begin{aligned}
\tilde{w}_{2\gamma}&=\frac{n-2\gamma}{2d_{\gamma}}Q_{2\gamma},\\
\tilde{w}_4 &= \frac{n-2\gamma}{32(2-\gamma)(n-2)^2}|E^{\hat{g}}|_{\hat{g}}^2,\\
\end{aligned}
\]
since $\hat{J} =0$.
Recall that $\tilde{v}$ satisfies 
\[
\Delta_+ \tilde{v}-(n+1)\tilde{v}=0,
\]
and 
\[
\tilde{v}=r^{\frac{n-1}{2}}\left(1 + \frac{1}{8(n-2)^3}|E^{\hat{g}}|_{\hat{g}}^2 r^4 +o(r^4)\right).
\]
Then $\tilde{g}=\tilde{\rho}^{2}g_+$ with $\tilde{\rho}=\tilde{v}^{-1}$. According to Lemma \ref{btlem}, we have
\[1-\tilde{v}^{-2}|d \tilde{v}|_{+}^2=1-|d\tilde{\rho}|_{\tilde{g}}^2 \geq 0.\]
Define
\[
\tilde{\xi}:= \tilde{v}^{-\frac{n-2\gamma}{2}}.
\]
Then
\[
\tilde{\xi}=r^{\frac{n-2\gamma}{2}}\left(1-\frac{n-2\gamma}{16(n-2)^3}|E^{\hat{g}}|_{\hat{g}}^2 r^4 +o(r^4)\right).
\]
By direct computation, we have
\[
\Delta_+ \tilde{\xi}+\frac{n^2 -(2\gamma)^2}{4}\tilde{\xi} =-\frac{(n-2\gamma)(n+2-2\gamma)}{4}\tilde{\xi} \left(1-\tilde{v}^{-2}|d \tilde{v}|_{+}^2 \right)\leq 0.
\]
Note that $\frac{\tilde{w}}{\tilde{\xi}}$ satisfies the equation:
\[
-\Delta_+ \left(\frac{\tilde{w}}{\tilde{\xi}}\right)=\left(\frac{n^2 -(2\gamma)^2}{4}+\frac{\Delta_+ \tilde{\xi}}{\tilde{\xi}}\right)\frac{\tilde{w}}{\tilde{\xi}}+2\left\langle d\left(\frac{\tilde{w}}{\tilde{\xi}}\right),\frac{d \tilde{w}}{\tilde{w}}\right\rangle_{g_+},\quad \left(\frac{\tilde{w}}{\tilde{\xi}}\right)\big{|}_{M}=1.
\]
Notice that $\frac{\tilde{w}}{\tilde{\xi}}>0$. Applying maximum principle, $\frac{\tilde{w}}{\tilde{\xi}}\leq 1$, i.e. $\tilde{w}\leq \tilde{\xi}$. Thus near the boundary, 
\[
1+\tilde{w}_{2\gamma}r^{2\gamma} +\tilde{w}_4 r^4 +o(r^4)\leq 1 -\frac{n-2\gamma}{16(n-2)^3}|E^{\hat{g}}|_{\hat{g}}^2 r^4 +o(r^4).
\]
Therefore $\tilde{w}_{2\gamma}\leq 0$ which implies that $Q_{2\gamma}\leq 0$ since $d_\gamma>0$.
\end{proof}

\begin{proposition}\label{qpn2}
Let $\delta \in (0,1)$ and $\gamma\in (1,\min\{n/2,2\})$, if $\hat{J}$ is a positive constant, then
\[
\frac{Q_{2\gamma}}{Q_{2\gamma}^{\mathbb{S}}}\leq \frac{\hat{J}}{J^{\mathbb{S}}}\leq \frac{Q_{2\delta}}{Q_{2\delta}^{\mathbb{S}}}.
\]
\end{proposition}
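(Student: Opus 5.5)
By scaling we may normalize $\hat{J}=J^{\mathbb{S}}=\frac{n}{2}$; both sides of each inequality scale the same way under constant dilations. We then run the barrier argument of Proposition \ref{qpn1}, with $\tilde{\rho}=\tilde{v}^{-1}$ and $\psi$ in place of $\bar{\rho}$ and $\varphi$.

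First we bound $\tilde{\rho}$. Since $\hat{J}$ is constant, we apply the maximum principle to (\ref{tee}): $\tilde{T}$ is superharmonic with constant boundary value $\frac{2}{n}\hat{J}=1$, so $\tilde{T}\geq 1$. Hence
\[
\tilde{\rho}^{-2}|d\tilde{\rho}|_{+}^2=|d\tilde{\rho}|^2_{\tilde{g}}=1-\tilde{T}\tilde{\rho}^2\leq 1-\tilde{\rho}^2 .
\]
This gives $\tilde{\rho}\leq 1$, so $\psi(\tilde{\rho})$ is defined. The Hopf lemma at the boundary, together with the expansion of $\tilde{T}$ obtained from $\tilde{\rho}_2,\tilde{\rho}_4$, yields a sign condition on the $r^2$ coefficient of $\tilde{T}$. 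This is the analogue of $\hat{J}\leq\frac{n}{2}Q_1^2$, presumably an inequality such as $|\hat{A}|^2\geq \hat{J}^2/n$, which in any case holds trivially by Cauchy--Schwarz.

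For the upper inequality, set $\tilde{u}:=\psi(\tilde{\rho})\tilde{\rho}^{\frac{n-2\gamma}{2}}$. As in (\ref{psi-ef}),
\[
\Delta_+\tilde{u}+\tfrac{n^2-4\gamma^2}{4}\tilde{u}=\tilde{\rho}^{\frac{n-2\gamma}{2}}\bigl(\tilde{A}_\gamma(\tilde{\rho})\tilde{\rho}^{-2}|d\tilde{\rho}|_+^2+\tilde{B}_\gamma(\tilde{\rho})\bigr).
\]
By Lemma \ref{fneg}, $\tilde{A}_\gamma>0$, and the bound above then gives
\[
\tilde{A}_\gamma\tilde{\rho}^{-2}|d\tilde{\rho}|_+^2+\tilde{B}_\gamma\leq \tilde{A}_\gamma(1-\tilde{\rho}^2)+\tilde{B}_\gamma=0,
\]
so $\tilde{u}$ is a supersolution. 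Let $\tilde{w}$ be the solution with $\tilde{w}\sim r^{\frac{n-2\gamma}{2}}$. The same maximum-principle argument applied to $\tilde{w}/\tilde{u}$, as in Lemma \ref{jnull}, gives $\tilde{w}\leq \tilde{u}$.

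We then compare expansions. Both $\tilde{w}$ and $\tilde{u}$ have the form $r^{\frac{n-2\gamma}{2}}(1+c_2r^2+c_{2\gamma}r^{2\gamma}+c_4r^4+\dots)$. The $r^2$ coefficients agree because $\hat{J}=J^{\mathbb{S}}$: both equal $\tilde{\rho}_2$-corrected versions of $u^{\mathbb{S}}_2$. Since $2<2\gamma<4$, the next order is $r^{2\gamma}$, so $\tilde{w}_{2\gamma}\leq\tilde{u}_{2\gamma}=\frac{n-2\gamma}{2d_\gamma}Q^{\mathbb{S}}_{2\gamma}$. Because $d_\gamma>0$ for $\gamma\in(1,2)$, this gives $Q_{2\gamma}\leq Q^{\mathbb{S}}_{2\gamma}$. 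Unlike Proposition \ref{qpn1}, no comparison at a higher order is needed here, since the $r^2$ terms match exactly. This is the step I expect to be the main technical point: verifying carefully that the $r^2$ coefficient of $\tilde{u}$, which combines $\psi_2$ with the expansion of $\tilde{\rho}$ in $r$, equals $-\frac{n-2\gamma}{4(2\gamma-2)}\hat{J}$ exactly when $\hat{J}=\frac n2$.

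For the lower inequality with $\delta\in(0,1)$, we repeat the construction with $\delta$ in place of $\gamma$. Lemma \ref{fneg} still gives $\tilde{A}_\delta>0$, so $\tilde{u}_\delta$ is again a supersolution and $\tilde{w}_\delta\leq\tilde{u}_\delta$. Now $2\delta<2$, so the leading correction is $r^{2\delta}$, and the comparison gives $\tilde{w}_{2\delta}\leq \tilde{u}_{2\delta}$. Since $d_\delta<0$, the inequality reverses to $Q_{2\delta}\geq Q^{\mathbb{S}}_{2\delta}$. The positivity of $\tilde{w}_\delta$ and $\tilde{u}_\delta$ needed for the quotient argument requires the spectral condition, which holds because the Yamabe type is nonnegative.
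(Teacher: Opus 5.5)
Your proposal follows the paper's proof. It uses the same normalization $\hat J=\frac n2$, the bound $\tilde T\ge 1$ (hence $\tilde\rho\le 1$) from (\ref{tee}), the barrier $\tilde u=\psi(\tilde\rho)\tilde\rho^{\frac{n-2\gamma}{2}}$ made a supersolution by Lemma \ref{fneg}, and the quotient comparison giving $\tilde w\le\tilde u$.

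The step you flagged checks out: at $\hat J=\frac n2$,
$\tilde u_2=\psi_2+\frac{n-2\gamma}{2}\tilde\rho_2=u^{\mathbb{S}}_2+\frac{n-2\gamma}{8}-\frac{n-2\gamma}{4n}\hat J=u^{\mathbb{S}}_2=\tilde w_2$.
You are also right that $2\gamma<4$ makes the paper's extra appeal to $\tilde w_4\ge\tilde u_4$ superfluous. Your $\delta$ case, which the paper omits, goes through as written.

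One correction: your justification of the normalization is false. Under $\hat g\mapsto\lambda^{-1}\hat g$ one has $\hat J\mapsto\lambda\hat J$ but $Q_{2\gamma}\mapsto\lambda^{\gamma}Q_{2\gamma}$, so the two sides do not scale the same way. What the normalized argument actually proves for a general positive constant $\hat J$ is
$\frac{Q_{2\gamma}}{Q^{\mathbb{S}}_{2\gamma}}\le\bigl(\frac{\hat J}{J^{\mathbb{S}}}\bigr)^{\gamma}$ and $\bigl(\frac{\hat J}{J^{\mathbb{S}}}\bigr)^{\delta}\le\frac{Q_{2\delta}}{Q^{\mathbb{S}}_{2\delta}}$.
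The inequality as literally stated fails for $\mathbb{H}^{n+1}$ with $\hat g=\lambda^{-1}g_{\mathbb{S}}$ and $\lambda>1$: there it reads $\lambda^{\gamma}\le\lambda\le\lambda^{\delta}$. This defect lies in the statement itself, and the paper's proof makes the same reduction without comment. Only the normalized case $\hat J=\frac n2$ is used in the proof of Theorem \ref{mth2}.
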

\begin{proof}
We only deal with the case  of $\gamma\in (1,\min\{n/2,2\})$ and the proof of the other case of $\delta \in (0,1)$ is similar. By conformal transformation, We may assume that $\hat{J}=J^{\mathbb{S}}=\frac{n}{2}$. Let $\tilde{g}=\tilde{\rho}^2 g_+$, and recall
\[
\tilde{T}=\frac{1-|d \tilde{\rho}|_{\tilde{g}}^2}{\tilde{\rho}^2},\quad \tilde{T}|_{M}=\frac{2}{n}\hat{J}=1.
\] 
By maximum principle we have $\tilde{T}\geq 1$, then $\tilde{\rho}^{-2}|d \tilde{\rho}|_{+}^2=|d\tilde{\rho}|_{\tilde{g}}^2=1-\tilde{T}\tilde{\rho}^2\leq 1-\tilde{\rho}^2$, i.e. $\tilde{\rho}\leq 1$. Then we can define
$$\tilde{u}:=\psi(\tilde{\rho})\cdot \tilde{\rho}^{\frac{n-2\gamma}{2}},\quad \tilde{\rho}\in [0,1].$$
Here the definition of $\psi(\tilde{\rho})$ comes from equation (\ref{dpsi}).
Similarly $\tilde{u}$ has asymptotics
\[
\tilde{u}=r^{\frac{n}{2}-\gamma}\left(1+\tilde{u}_2 r^2 +\tilde{u}_{2\gamma} r^{2\gamma}+\tilde{u}_4 r^4 +o(r^4)\right),\quad \textrm{as } r\to 0.
\]
where
\[
\begin{aligned}
&\tilde{u}_2 =-\frac{n-2\gamma}{4(2\gamma-2)}\hat{J},\quad \tilde{u}_{2\gamma}=\frac{n-2\gamma}{2d_{\gamma}}Q_{2\gamma}^{g_{\mathbb{S}}},\\
&\tilde{u}_{4}=-\frac{n-2\gamma}{16(n-2)^3}|E^{\hat{g}}|_{\hat{g}}^2 +\frac{n-2\gamma}{16(2\gamma-2)(2\gamma-4)}\left(\frac{n-2\gamma+4}{2}-\frac{2\gamma-2}{n}\right)(J^{\mathbb{S}})^2.
\end{aligned}
\]
And it is easy to check that
\[
\Delta_+ \tilde{u}+\frac{n^2 -(2\gamma)^2}{4}\tilde{u}=\tilde{\rho}^{\frac{n-2\gamma}{2}}\left(\tilde{A}_{\gamma}(\tilde{\rho})\tilde{\rho}^{-2}|d\tilde{\rho}|_{+}^2 +\tilde{B}_{\gamma}(\tilde{\rho})\right)
\]
where $\tilde{A}_\gamma$ and $\tilde{B}_{\gamma}$ are defined by equation (\ref{psi-ef}). Then $\tilde{T}\geq 1$ implies that
\[
\tilde{A}_{\gamma}(\tilde{\rho})\tilde{\rho}^{-2}|d\tilde{\rho}|_{+}^2 +\tilde{B}_{\gamma}(\tilde{\rho})\leq \tilde{A}_{\gamma}(1-\tilde{\rho}^2)+\tilde{B}_{\gamma}(\tilde{\rho})=0
\]
since $\tilde{A}_{\gamma}(\tilde{\rho})\geq 0$ by Lemma \ref{fneg}. Therefore
\begin{equation}\label{ttu}
\Delta_+ \tilde{u}+\frac{n^2 -(2\gamma)^2}{4}\tilde{u} \leq 0.
\end{equation}
Let $\tilde{w}$ satisfy the linear equation
\begin{equation}\label{twe}
\Delta_+ \tilde{w}+\frac{n^2 -(2\gamma)^2}{4}\tilde{w} = 0,\quad \tilde{w}\sim r^{\frac{n-2\gamma}{2}} \textrm{ as }r\to 0.
\end{equation}
Then $\tilde{w}$ has asymptotics as follow
\[
\tilde{w}=r^{\frac{n-2\gamma}{2}}\left(1+\tilde{w}_2 r^2 +\tilde{w}_{2\gamma}r^{2\gamma}+\tilde{w}_4 r^4 +o(r^4)\right),
\]
where
\[
\begin{aligned}
&\tilde{w}_2 =-\frac{n-2\gamma}{4(2\gamma-2)}\hat{J},\quad \tilde{w}_{2\gamma}=\frac{n-2\gamma}{2d_{\gamma}}Q_{2\gamma},\\
&\tilde{w}_{4}=-\frac{n-2\gamma}{16(2\gamma-4)(n-2)^2}|E^{\hat{g}}|^{2}_{\hat{g}} +\frac{n-2\gamma}{16(2\gamma-2)(2\gamma-4)}\left(\frac{n-2\gamma+4}{2} -\frac{2\gamma-2}{n}\right)\hat{J}^2.
\end{aligned}
\]
Since $\hat{J}=J^{\mathbb{S}}=\frac{n}{2}$, it is easy to check that
\[
\tilde{w}_2 =\tilde{u}_2,\quad\textrm{and }\quad\tilde{w}_4 \geq \tilde{u}_4 \left(\Leftrightarrow \frac{1}{4-2\gamma}|E^{\hat{g}}|_{\hat{g}}^2\geq -\frac{1}{n-2}|E^{\hat{g}}|_{\hat{g}}^2\right).
\]
Note that $\frac{\tilde{w}}{\tilde{u}}$ satisfies the equation:
\begin{equation}\label{uw2}
-\Delta_+ \left(\frac{\tilde{w}}{\tilde{u}}\right)=\left(\frac{n^2 -(2\gamma)^2}{4}+\frac{\Delta_+ \tilde{u}}{\tilde{u}}\right)\frac{\tilde{w}}{\tilde{u}}+2\left\langle\nabla\left(\frac{\tilde{w}}{\tilde{u}}\right),\frac{\nabla \tilde{w}}{\tilde{w}}\right\rangle_{g_+},\quad \left(\frac{\tilde{w}}{\tilde{u}}\right)\big{|}_{M}=1.
\end{equation}
Notice that $\frac{\tilde{w}}{\tilde{u}}>0$. Applying maximum principle according to equation (\ref{ttu}), we show that $\frac{\tilde{w}}{\tilde{u}}$ can not attain an interior positive maximum. Hence $\frac{\tilde{w}}{\tilde{u}}\leq 1$, i.e. $\tilde{w}\leq \tilde{u}$. Thus near the boundary, 
\[
1+\tilde{w}_2 r^2 +\tilde{w}_{2\gamma}r^{2\gamma} +\tilde{w}_4 r^{4}+o(r^4)\leq 1+\tilde{u}_2 r^2 +\tilde{u}_{2\gamma} r^{2\gamma}+\tilde{u}_4 r^4 +o(r^4).
\]
According to $\tilde{w}_2 =\tilde{u}_2$ and $\tilde{w}_4 \geq \tilde{u}_4$, we have $\tilde{w}_{2\gamma} \leq \tilde{u}_{2\gamma}$, i.e. 
\[
Q_{2\gamma}\leq Q_{2\gamma}^{\mathbb{S}}, \quad \textrm{on $M$,}
\]
since $d_{\gamma}>0$.
\end{proof}

\begin{lemma}\label{qqs}
If $\hat{J}=J^{\mathbb{S}}=\frac{n}{2}$ and $Q_{2\gamma}=Q_{2\gamma}^{\mathbb{S}}$ with $\gamma\in (1,\min\{n/2,2\})$, then $(X, g_+)$ is isometric to the hyperbolic space $\mathbb{H}^{n+1}$.
\end{lemma}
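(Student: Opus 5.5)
We work in the setting of Proposition \ref{qpn2}, with $\hat{J}=J^{\mathbb{S}}=\frac{n}{2}$, so that $\tilde{T}|_M=1$. Since $Q_{2\gamma}=Q_{2\gamma}^{\mathbb{S}}$, the comparison $\tilde{w}\leq\tilde{u}$ from that proof is sharp at order $r^{2\gamma}$.

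\textbf{Step 1: extract a boundary identity.} We know $\tilde{w}_2=\tilde{u}_2$ and $\tilde{w}_{2\gamma}=\tilde{u}_{2\gamma}$. Together with $\tilde{w}\le\tilde{u}$, the next order of the expansion forces $\tilde{w}_4\le\tilde{u}_4$. But we already have $\tilde{w}_4\ge\tilde{u}_4$, so $\tilde{w}_4=\tilde{u}_4$. By the equivalence recorded in the proof of Proposition \ref{qpn2},
\[
\Big(\frac{1}{4-2\gamma}+\frac{1}{n-2}\Big)|E^{\hat g}|^2_{\hat g}=0 .
\]
The coefficient is positive for $\gamma<2$, so $E^{\hat g}\equiv0$ and $\hat g$ is Einstein.

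\textbf{Step 2: rigidity in the interior.} With $\hat g$ Einstein and $\hat J$ constant, we show $\tilde T\equiv1$. Two routes are available.

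\emph{Route (a), strong maximum principle.} Set $h=\tilde w/\tilde u$, so $h\le1$ and $h|_M=1$, and subtract equation (\ref{uw2}) from its value at $h\equiv1$. The function $1-h\ge0$ satisfies a linear elliptic inequality with a nonnegative zeroth-order contribution, coming from $\Delta_+\tilde u+\frac{n^2-4\gamma^2}{4}\tilde u\le0$. By the strong maximum principle, either $h\equiv1$, or $1-h>0$ in the interior. In the latter case $1-h$ vanishes at the boundary at a rate governed by the indicial roots.

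\emph{Route (b), direct argument on $\tilde T$.} Use the Hopf lemma on $\tilde T$ through equation (\ref{tee}). With $E^{\hat g}=0$, the first nontrivial coefficient in the expansion of $\tilde T-1$ should relate to the boundary data.

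The cleaner route is the one that proves $h\equiv1$, that is $\tilde w=\tilde u$. Then equality holds in (\ref{ttu}). Since $\tilde A_\gamma(\tilde\rho)>0$ for $\tilde\rho\in(0,1)$ by Lemma \ref{fneg}, equality forces $\tilde\rho^{-2}|d\tilde\rho|_+^2=1-\tilde\rho^2$, which is exactly $\tilde T\equiv1$. Equation (\ref{tee}) then gives $\tilde\rho^2|\tilde E|^2_{\tilde g}=0$, so $\tilde E=0$.

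\textbf{Step 3: conclude via Obata.} With $\tilde E=0$ and $\Delta_{\tilde g}\tilde\rho=-(n+1)$, we get
\[
\tilde\nabla^2\tilde\rho+\tilde g=0,\qquad \tilde\rho|_M=0 .
\]
As in the proof of Lemma \ref{btlem} (the Obata argument, or \cite[Lemma 2.2]{WZ1}), $(X,\tilde g)$ is a hemisphere-type model, and hence $g_+$ is hyperbolic.

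\textbf{Main obstacle.} The hardest step is proving $\tilde w\equiv\tilde u$ from equality at finite order, i.e. upgrading the boundary coefficient equalities to interior equality. I would first try the strong maximum principle together with the asymptotics. Let $\phi=\tilde u-\tilde w\ge0$. It satisfies $(\Delta_++\frac{n^2-4\gamma^2}{4})\phi\le0$ and is $O(r^{\frac n2-\gamma+4})$ at the boundary, since the coefficients at orders $r^2$, $r^{2\gamma}$ and $r^4$ all agree. If $\phi\not\equiv0$, a Hopf-type lemma adapted to the indicial root $\frac n2+\gamma$ should force $\phi\gtrsim r^{\frac n2+\gamma}$, i.e. a nonzero $r^{2\gamma}$ coefficient, contradicting $\tilde w_{2\gamma}=\tilde u_{2\gamma}$.

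Making this rigorous uses the standard positive supersolution comparison. If that comparison fails, the fallback is to show directly that Einstein $\hat g$ with $Q_{2\gamma}=Q_{2\gamma}^{\mathbb{S}}$ pins down the global term $g_n$ through the scattering data. That route seems harder.
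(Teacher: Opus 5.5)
\textbf{Verdict.} Your proposal is correct in substance, but the key step, upgrading $\tilde w\le\tilde u$ with matching $r^2$ and $r^{2\gamma}$ coefficients to $\tilde w\equiv\tilde u$, follows a different route from the paper. There is also one error in Step 3, described at the end.

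\textbf{The paper's route.} It sets $\tilde r=\tilde u^{2/(n-2\gamma)}$ and passes to the Case--Chang weighted space $(X,\tilde r^2g_+,\tilde r^{1-2\gamma}dv,-2\gamma)$. There (\ref{ttu}) becomes $J^{m_0}_{\phi_0}\ge0$. Integrating $\tilde U\,L^{m_0}_{2,\phi_0}\tilde U=0$ by parts, with $\tilde U=\tilde w/\tilde u$, gives a boundary term proportional to $\int_M(Q_{2\gamma}-Q^{\mathbb S}_{2\gamma})$. So the weighted energy vanishes and $\tilde U\equiv1$.

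\textbf{Your route.} The Hopf-type comparison you sketch does work, so the fallback you mention is not needed. Write $L=-\Delta_+-\frac{n^2-4\gamma^2}{4}$ and $s=\frac n2+\gamma$, and work in a thin collar $\{r<\epsilon\}$.
\begin{itemize}
\item The function $w=r^s+r^{s+\delta}$, with $0<\delta<2$, satisfies $Lw=-\delta(2\gamma+\delta)r^{s+\delta}+O(r^{s+2})<0$.
\item The function $h=r^{n/2}$ satisfies $Lh=\gamma^2r^{n/2}+O(r^{n/2+2})>0$.
\item Hence $q=(\phi-cw)/h$ cannot have a negative interior minimum, since there $L(qh)\le qLh<0$, contradicting $L(\phi-cw)\ge0$.
\item By the strong maximum principle ($-\Delta_+\phi\ge\frac{n^2-4\gamma^2}{4}\phi\ge0$), $\phi>0$ on $\{r=\epsilon\}$, and $q\to0$ at $M$. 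This gives $\phi\ge c\,r^s$.
\item That contradicts $\phi=O(r^{\frac n2-\gamma+4})=o(r^s)$; this is exactly where $\gamma<2$ enters.
\end{itemize}
Step 1 (Einstein boundary) is therefore unnecessary: only $\tilde w_2=\tilde u_2$, $\tilde w_{2\gamma}=\tilde u_{2\gamma}$ and $\gamma<2$ are used.

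\textbf{What each route buys.} Yours is self-contained: it avoids the metric-measure formalism and the bookkeeping for the weight $\tilde r^{1-2\gamma}$, which is not integrable near $M$. It is also pointwise. Since $\tilde u_2=\tilde w_2$ everywhere, $\phi\ge c r^s$ forces $Q_{2\gamma}<Q^{\mathbb S}_{2\gamma}$ at every point of $M$ unless $g_+$ is hyperbolic, so equality at a single point already gives rigidity. The paper's route needs only the integral identity and reuses the Case--Chang framework. Once $\tilde w\equiv\tilde u$, the two proofs coincide: equality in (\ref{ttu}), then $\tilde A_\gamma>0$ gives $\tilde T\equiv1$, then (\ref{tee}) gives $\tilde E=0$.

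\textbf{Correction to Step 3.} The identity $\Delta_{\tilde g}\tilde\rho=-(n+1)\tilde T$ stated before Lemma \ref{jnull} is missing a factor. From $\Delta_+\tilde\rho^{-1}=(n+1)\tilde\rho^{-1}$ one gets $\Delta_{\tilde g}\tilde\rho=-(n+1)\tilde\rho\,\tilde T$. So $\tilde T\equiv1$ and $\tilde E=0$ yield $\tilde\nabla^2\tilde\rho+\tilde\rho\,\tilde g=0$ with $\tilde\rho|_M=0$; in the model, $\tilde\rho$ is the height function on the hemisphere. Your equation $\tilde\nabla^2\tilde\rho+\tilde g=0$ is instead the Euclidean-ball equation of Lemma \ref{btlem}. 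Combined with the correct identity it would force $\tilde\rho\equiv1$, contradicting $\tilde\rho|_M=0$. The rigidity result to invoke is the Obata equation with boundary, as in \cite{CLW2}.
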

\begin{proof}
Let $\tilde{r}=\tilde{u}^{\frac{2}{n-2\gamma}}$ and $g=\tilde{r}^2 g_+$, recall equation (\ref{ttu}), we have
\begin{equation}\label{tre}
\Delta_{g} \tilde{r}+\frac{n+2\gamma}{2}\frac{1-|d\tilde{r}|^{2}_{g}}{\tilde{r}}\leq 0.
\end{equation}

By \cite[Section 3]{CC}, consider the metric measure space $(X^{n+1},g,\tilde{r}^{m_{0}}dv_{g},m_{0}-1)$ with $m_0=1-2\gamma$. Due to  \cite[Lemma 3.2]{CC}, 
\[
J_{g}=-\frac{\Delta_g \tilde{r}}{\tilde{r}}-\frac{n+1}{2}\left(\frac{1-|d \tilde{r}|^{2}_{g}}{\tilde{r}^2}\right),
\]
and 
\[
J_{\phi_0}^{m_0}=J_g-\frac{m_0}{n+1}\left(J_g+\tilde{r}^{-1}\Delta_g\tilde{r}\right).
\]
where $J_{\phi_0}^{m_0}$ is defined in \cite[Section 3]{CC}.
It follows by equation (\ref{tre}) that
\[
J_{\phi_0}^{m_0}\geq 0.
\]

Recall equation (\ref{uw2}), denote $\tilde{U}=\frac{\tilde{w}}{\tilde{u}}$, i.e. $\tilde{w}=\tilde{r}^{\frac{n-2\gamma}{2}}\tilde{U}$, then $\tilde{U}$ satisfies
\[
L_{2,\phi_0}^{m_0}\tilde{U}=\tilde{r}^{-\frac{m_0 +n+3}{2}}\left(-\Delta_+ -\frac{n^2 -(2\gamma)^2}{4}\right)\left(\tilde{r}^{\frac{m_0 +n-1}{2}}\tilde{U}\right)=0.
\]
where $L_{2,\phi_0}^{m_0}$ is defined in \cite[equation (3.2) in Section 3]{CC}.
Moreover $\tilde{U}$ has asymptotics
\[
\tilde{U}=1+\frac{n-2\gamma}{2d_{\gamma}}\left(Q_{2\gamma}-Q_{2\gamma}^{\mathbb{S}}\right)r^{2\gamma}+O(r^4),\quad \textrm{as }r\to 0.
\]
Then we compute that
\[
\int_{X}\left(|\nabla \tilde{U}|^{2}_{g}+\frac{m_0 +n-1}{2}J_{\phi_0}^{m_0}\tilde{U}^2 \right)\tilde{r}^{m_0}dv_{g}=\int_{X}\tilde{U}\left(L_{2,\phi_0}^{m_0}\tilde{U}\right) \tilde{r}^{m_0}dv_g +\int_{M}\tilde{r}^{m_0} \tilde{U}\frac{\partial \tilde{U}}{\partial\nu}dv_{\hat{g}},
\]
where $\nu$ is the outer unit normal vector for $g$.
Since $L_{2,\phi_0}^{m_0}\tilde{U}=0$, $\tilde{U}|_{M}=1$ and $\frac{\partial}{\partial\nu}=-\frac{\partial}{\partial r}$, we have
\[
0\leq \int_{X}\left(|\nabla \tilde{U}|^{2}_{g}+\frac{m_0 +n-1}{2}J_{\phi_0}^{m_0}\tilde{U}^2 \right)\tilde{r}^{m_0}dv_{g}=-\frac{n-2\gamma}{2d_{\gamma}} \int_{M} \left(Q_{2\gamma}-Q_{2\gamma}^{\mathbb{S}}\right)dv_{\hat{g}}=0.
\]
Thus $\tilde{U}=1$ and $J_{\phi_0}^{m_0}=0$. Then the equality of equation (\ref{ttu}) holds,  we have $\tilde{T}=1$ and furthermore $\tilde{E}_{\tilde{g}}=0$ by equation (\ref{tee}). Recall
\[
\tilde{E}_{\tilde{g}}=-(n-1)\tilde{\rho}\left(\tilde{\nabla}^{2}\tilde{\rho}-\frac{\Delta_{\tilde{g}}\tilde{\rho}}{n+1}\right), \quad \Delta_{\tilde{g}}\tilde{\rho}=-(n+1)\tilde{T}
\]
which imply that
\[
\tilde{\nabla}^2 \tilde{\rho}+\tilde{\rho}\tilde{g}=0,\quad \tilde{\rho}|_{M}=0.
\]
By the standard argument of Obata equations with boundary or \cite{CLW2}, we have $(X,\tilde{g})$ is isometric to the upper half sphere $(\mathbb{S}_{+}^{n+1},g_{\mathbb{S}})$, i.e. $(X, g_+)$ is isometric to the hyperbolic space $\mathbb{H}^{n+1}$.
\end{proof}

\vspace{0.2in}
Now we are ready to prove Theorem \ref{mth2}.

\begin{proof}[Proof of Theorem \ref{mth2}]
Firstly if $Y_{2}(M,[\hat{g}])=0$, then $\hat{J}=0$, by Lemma \ref{jnull}, we have $Q_{2\gamma}\leq 0$ and $Q_{2\gamma}(p)< 0$ for some $p\in M$. Thus
\[
Y_{2\gamma}(M,[\hat{g}])< 0.
\]
If $Y_{2}(M,[\hat{g}])>0$, by conformal transformation, assume that the Yamabe metric $\hat{g}$ satisfies $\hat{J}=\frac{n}{2}$, then
\[
Y_2 (M,[\hat{g}])=\frac{n(n-2)}{4}\textrm{Vol}(M,\hat{g})^{\frac{2}{n}}\quad \Rightarrow \textrm{Vol}(M,\hat{g})^{\frac{2}{n}}=\frac{4}{n(n-2)}Y_2 (M,[\hat{g}]).
\]
Recall the definition of $Y_{2\gamma}(M,[\hat{g}])$, it follows that
\[
Y_{2\gamma}(M,[\hat{g}])\leq \frac{n-2\gamma}{2}Q_{2\gamma}^{\mathbb{S}}\textrm{Vol}(M,\hat{g})^{\frac{2\gamma}{n}}
= \left(\frac{4}{n(n-2)}\right)^{\gamma}\frac{\Gamma\left(\frac{n}{2}+\gamma\right)}{\Gamma\left(\frac{n}{2}-\gamma\right)}\left(Y_{2} (M,[\hat{g}])\right)^{\gamma},\]
where the first inequality dues to Proposition \ref{qpn2}.

If the equality holds, then $Q_{2\gamma}=Q_{2\gamma}^{\mathbb{S}}$. By Lemma \ref{qqs}, we have $(X, g_+)$ is isometric to the hyperbolic space $\mathbb{H}^{n+1}$.

\end{proof}



\begin{thebibliography}{99}


%
%

\bibitem[Be]{Be1} 
Beckner W.
{Sharp Sobolev inequalities on the sphere and the Moser-Trudinger inequality}.
Ann of Math, 1993, {138}(2): 213-243.

\bibitem[CC]{CC}
Case J, Chang S-Y A.
{On the fractional GJMS operators},
Commun Pure Appl Math, 2016, {69}(6): 1017-1061.

\bibitem[CDLS]{CDLS} 
Chru\'{s}ciel P, Delay E,  Lee J,  Skinner D.
{Boundary regularity of conformal compact Einstein metrics}.
J Diff Geom, 2005, {69}: 111-136.

%

\bibitem[CLW1]{CLW}
 Chen X, Lai M, Wang F. 
{Escobar-Yamabe compactification for Poincare-Einstein manifolds and rigidity theorems}. 
Adv Math, 2019, {343}: 16-35.

\bibitem[CLW2]{CLW2}
 Chen X, Lai M, Wang F. 
{The Obata equation with Robin boundary condition}. 
Revista Matem{\'a}tica Iberoamericana, 2021, {2}: 643–670.



\bibitem[CS]{CS}
Caffarelli L. Silvestre L.. 
{An extension problem related to the fractional Laplacian}.
Comm. Partial Differential Equations, 2007, {32}(7-9):1245–1260.



\bibitem[DJ]{DJ}
Dutta S, Javaheri M. 
{Rigidity of conformally compact manifolds with the round sphere as conformal infinity}. 
Adv Math, 2010, {224}: 525-538.

\bibitem[Es1]{Es1}
Escobar J. 
{The Yamabe problem on manifolds with boundary}. 
J Differ Geom, 1992, {35}(1): 21-84.

\bibitem[Es2]{Es2}
Escobar J. 
{Conformal deformation of a Riemannian metric to a scalar flat metric with constant mean curvature on the boundary}. 
Ann of Math (2), 1992, {136}(1): 1-50. (With an addendum, Ann of Math (2), 1994, {139}(3): 749-750.)


\bibitem[Go]{Go}
Gonz\'{a}lez, M M. {Recent Progress on the Fractional Laplacian in Conformal Geometry}. Recent Developments in Nonlocal Theory, edited by Giampiero Palatucci and Tuomo Kuusi, Warsaw, Poland: De Gruyter Open Poland, 2017, pp. 236-273. 


\bibitem[GH]{GH}
Gursky M J, Han Q. 
{Non-existence of Poincar\'{e}-Einstein manifolds with prescribed conformal infinity}. 
Geom Funct Anal, 2017, {27}(4): 863-879.

\bibitem[GJMS]{GJMS}
Graham C R, Jenne R, Mason L J,  Sparling G A J.
{Conformally invariant Powers of the Laplacian. I. Existence}. 
J London Math Soc (2),  1992, {46}(3): 557-565. 


\bibitem[GQ]{GQ} 
Guillarmou C, Qing J. 
{Spectral Characterization of Poincar\'{e}-Einstein manifolds with infinity of positive Yamabe type}.  
Int Math Res Notices, 2010,  9: 1720-1740.



\bibitem[GZ]{GZ} 
Graham C R,  Zoworski M. 
{Scattering matrix in conformal geometry}. 
 Invent Math, 2003,  {152}:  89-118.


%
%

\bibitem[JS]{JS}
Joshi M, S\'{a} Barreto A. 
{Inverse scattering on asymptotically hyperbolic manifolds}.  
Acta Math, 2000,  {184}: 41-86.

 

\bibitem[Le]{Le} 
Lee J. 
{The spectrum of an asymptotically hyperbolic Einstein manifold}. 
Comm  Anal Geom, 1995,  {3}(1-2): 253-271.

%
\bibitem[LQS]{LQS} 
 Li G, Qing J, Shi Y. 
{Gap phenomena and curvature estimates for conformally compact Einstein manifolds}. 
Trans Amer Math Soc, 2017,  {369}(6): 4385-4413.
%
%
%

\bibitem[MM]{MM} 
Mazzeo R,  Melrose R. 
{Meromorphic extension of the resolvent on complete spaces with asymptotically constant negative curvature}. 
J Funct Anal, 1987,   {75}: 260-310.


%
%
%

%
%
%
%



\bibitem[WW]{WW}
Wang X, Wang Z.
{On a sharp inequality relating Yamabe invariants on a Poincare-Einstein manifold}.
Proc. Amer. Math. Soc. 150 (2022), 4923-4929.



\bibitem[WZ1]{WZ1}
Wang F , Zhou H. 
{Comparison theorems for GJMS operators}. 
Sci. China Math. 64, 2479–2494 (2021).

\bibitem[WZ2]{WZ2}
Wang F , Zhou H. 
{Lower bound for the relative volume of Poincar\'{e}-Einstein manifolds}.
arXiv:2112.06669.

\end{thebibliography}
\end{document}